


\documentclass{amsart}
\usepackage{amscd,amssymb, amsfonts, amsmath}
\usepackage{mathrsfs} 

\newtheorem{theorem}{Theorem}[section]
\newtheorem{proposition}[theorem]{Proposition}

\newtheorem{corollary}[theorem]{Corollary}
\theoremstyle{definition}
\newtheorem{definition}[theorem]{Definition}

\theoremstyle{remark}

\numberwithin{equation}{section}


\usepackage{xy}
\input xy
\xyoption{all}
\pagestyle{headings}


\newcommand{\R}{\mathbb{R}}






\DeclareMathOperator{\dom}{dom}

\DeclareMathOperator{\colim}{colim}

\renewcommand{\hom}{\operatorname{Hom}}


\DeclareMathOperator{\Sing}{Sing}

\newcommand{\po}{\ar@{}[dr]|(.7){\Searrow}}
\newcommand{\pb}{\ar@{}[dr]|(.3){\Nwarrow}}

\DeclareMathOperator{\Map}{Map}
\DeclareMathOperator{\ev}{Ev}
\DeclareMathOperator{\codom}{codom}
\newcommand{\Top}{\operatorname{Top}}

\newcommand{\mathcolon}{\colon\,}
\renewcommand{\smash}{\wedge}
\newcommand{\cat}[1]{\mathcal{#1}}

\newcommand{\spectra}{\text{Sp}^{O}}
\newcommand{\Gspectra}{G\text{-}\spectra }
\newcommand{\boxprod}{\mathbin\square}

\hfuzz2pt

\begin{document}

\title{An alternative approach to equivariant stable homotopy theory} 


\author{Mark Hovey}
\address{Department of Mathematics \\ Wesleyan University
\\ Middletown, CT 06459}
\email{hovey@member.ams.org}

\author{David White}
\address{Department of Mathematics \\ Wesleyan University
\\ Middletown, CT 06459}
\email{dwhite03@wesleyan.edu}


\begin{abstract} 
Building on the work of Martin Stolz~\cite{stolz-thesis}, we develop
the basics of equivariant stable homotopy theory starting from the
simple idea that a $G$-spectrum should just be a spectrum with an
action of $G$ on it, in contrast to the usual approach in which the
definition of a $G$-spectrum depends on a choice of universe.
\end{abstract}

\maketitle

\section{Introduction}

In the standard approach to equivariant stable homotopy theory,
pioneered by Lewis, May, and Steinberger
in~\cite{lewis-may-steinberger} and reaching the current state of the
art in the work of Mandell and
May~\cite{mandell-may-equivariant-orthogonal}, the definition of a
$G$-spectrum for a compact Lie group $G$ depends on a choice of
$G$-universe $\cat{U}$; that is, on a set of orthogonal
$G$-representations closed under finite direct sums and summands,
containing the trivial one-dimensional representation. We convert the
universe into a category enriched over pointed $G$-spaces by defining
$\cat{U} (V,W)$ to be the space of orthogonal isomorphisms $O
(V,W)_{+}$ with $G$ acting by conjugation.  A $\cat{U}$-space is then
an enriched functor from $X\mathcolon \cat{U}\xrightarrow{}G\Top_{*}$
to the category of based $G$-spaces.  The sphere $S$ is a
$\cat{U}$-space, where $S (V)=S^{V}$, the one-point compactification
of $V$.  A $G\cat{U}$-spectrum $X$ is then an external module over
$S$, in the sense that we have an associative and unital natural
transformation of functors $\cat{U}\times
\cat{U}\xrightarrow{}G\Top_{*}$ from $S (V)\wedge X (W)$ to $X
(V\oplus W)$.  Then one defines homotopy groups $\pi_{q}^{H}(X)$ for
each closed subgroup $H$ of $G$ and for all integers $q$, and declares
the homotopy isomorphisms to be the weak equivalences.

Historically, the universe has been thought to be central to the
definition of a $G$-spectrum.  For example, a $G\cat{U}$-spectrum
where $\cat{U}$ consists of the trivial $G$-representations has been
called a ``naive'' $G$-spectrum, whereas a $G\cat{U}$-spectrum based
on a complete $G$-universe $\cat{U}$, in which every
finite-dimensional orthogonal $G$-representation occurs, has been
called a ``genuine'' $G$-spectrum. 

In this paper we show that we can think of a universe as a Quillen
model structure on the category of naive $G$-spectra.  That is:
\begin{enumerate}
\item For us, a $G$-spectrum is always an orthogonal spectrum $X$ with a
continuous action of $G$ on it, so a set of based $G\times O
(n)$-spaces $X_{n}$ with associative unital $G\times O (p)\times O
(q)$-structure maps 
\[
S^{p}\wedge X_{q} \xrightarrow{} X_{p+q}.
\]
\item Given a $G$-spectrum $X$ and an orthogonal $G$-representation
$V$ of dimension $n$, there is a $G$-space $\ev_{V} (X)= X
(V)$ defined by $X (V)=X_{n}$ as an $O (n)=O (V)$-space, with $G$-action
\[
g\cdot x=\rho (g) (gx) = g (\rho (g)x))
\]
where $\rho \mathcolon G\xrightarrow{}O (n)$ corresponds to $V$. The
functor $\ev_{V}$ from $G$-spectra to based $G$-spaces has a left
adjoint $F_{V}$.  The structure maps of $X$ then make $X$ into a
$G\cat{U}$-spectrum for any universe $\cat{U}$.  Thus the category of
naive $G$-spectra is equivalent to the category of $G\cat{U}$-spectra
for any $G$-universe $\cat{U}$.
\item Given a $G$-universe $\cat{U}$, there is a symmetric monoidal
$\cat{U}$-level model structure on $G$-spectra, in which a map $f$ is
a weak equivalence or fibration if and only if $f (V)$ is a weak
equivalence or fibration of $G$-spaces for all $V\in \cat{U}$.
\item One can then form the smallest symmetric monoidal Bousfield
localization of the $\cat{U}$-level model structure with respect to
the maps
\[
S^{V} \wedge F_{V}S^{0} = F_{V}S^{V} \xrightarrow{} S=F_{0}S^{0}
\]
for $V\in \cat{U}$, where this map is adjoint to the identity map
$S^{V}\xrightarrow{}\ev_{V}F_{0}S^{0}=S^{V}$.  In the homotopy
category of this Bousfield localization, smashing with $S^{V}$ for
$V\in \cat{U}$ is an equivalence with inverse given by smashing with
$F_{V}S^{0}$.  The resulting model category structure coincides with
the stable model structure on $G\cat{U}$-spectra
of~\cite{mandell-may-equivariant-orthogonal} under the equivalence
between $G$-spectra and $G\cat{U}$-spectra.
\end{enumerate}

We stress that the general idea of this paper has been known to the
experts for a long time. In Elmendorf and May's 1997
paper~\cite{elmendorf-may}, they showed that equivariant $S$-modules
are independent of the universe up to equivalence and that different
universes correspond to different model category structures.  In
Section~V.I of~\cite{mandell-may-equivariant-orthogonal}, Mandell and
May show if $\cat{U}'$ is a subuniverse of $\cat{U}$, then the
category of $G\cat{U}$-spectra is equivalent to the category of
$G\cat{U}'$-spectra, and they describe the model structure on
$G\cat{U}$-spectra for a complete universe $\cat{U}$ that corresponds
to a subuniverse $\cat{U}'$.  Our approach is the reverse to theirs,
as we start with the trivial universe; we acknowledge a debt to Neil
Strickland, who told the first author such an approach was possible
sometime around the year 2000.  Stolz's 2011
thesis~\cite{stolz-thesis} contains a very similar approach to ours,
the primary difference being that we use Bousfield localization
instead of stable homotopy isomorphisms, and the secondary difference
being that Stolz prefers to use the more abstruse definition of a
$G$-orthogonal spectrum as a certain kind of $G$-functor.  It would be
fair to think of this paper as a popularization and simplification of
Stolz's work.

\section{$G$-spaces and $G$-spectra}

We first fix notation.  A topological space is a compactly generated,
weak Hausdorff space, and all constructions, such as limits and
colimits, are carried out in this bicomplete closed symmetric monoidal
category $\Top$ or its pointed analogue $\Top_{*}$. The symbol $G$
will always denote a compact Lie group.  A $G$-space is a space with a
continuous left action of $G$, and a based $G$-space is a $G$-space
with a distinguished basepoint that is fixed by the action of $G$.
The category $\Top_{G}$ of $G$-spaces and nonequivariant maps is 
closed symmetric monoidal, where we use the diagonal action of $G$ on
$X\times Y$, and the conjugation action of $G$ on the (non-equivariant)
mapping space $\Map (X,Y)$.  That is, $(g\cdot f)(x)=g\cdot
f(g^{-1}\cdot x)$.  The category $G\Top$ of $G$-spaces and equivariant
maps is also closed symmetric monoidal, as a subcategory of $G$-spaces
and nonequivariant maps.  The category $G\Top$ is enriched, tensored,
and cotensored over $\Top$ via the symmetric monoidal left adjoint
that takes $X\in \Top$ to $X$ with the trivial $G$-action.  The
enrichment over $\Top$ is then given by the subspace $\Map_{G} (X,Y)$
of equivariant maps.  

Given a $G$-space $X$ and a closed subgroup $H$ of $G$, we can
consider the fixed points $X^{H}$.  This is a functor from $G\Top$ to
$\Top$ with left adjoint the functor that takes $Y$ to $G/H \times Y$.
(Note that the reason to assume that $H$ is closed is so that the
usual definition of $G/H$ as cosets of $H$ with the quotient topology
is weak Hausdorff---if $H$ were not closed then we would have to take
the closure anyway to stay in weak Hausdorff spaces).  

The category $G\Top$ and its pointed analogue $G\Top_{*}$ are proper,
cellular, topological, symmetric monoidal model categories.  For the standard
notions of model category theory, see~\cite{hovey-model}
or~\cite{hirschhorn}.  A map $f$ in $G\Top_{*}$ is a weak equivalence
(resp. fibration) if and only if $f^{H}$ is a weak equivalence
(resp. fibration) in $\Top_{*}$ for every closed subgroup $H$.  The
generating cofibrations are the maps 
\[
(G/H)_{+} \wedge S^{n-1}_{+} \xrightarrow{} (G/H)_{+} \wedge D^{n}_{+}
\]
for all $n\geq 0$ (where $S^{-1}$ is the empty set) and for all closed
subgroups $H$ of $G$.  The generating trivial cofibrations are the maps
\[
(G/H)_{+} \wedge D^{n}_{+} \xrightarrow{} (G/H)_{+} \wedge D^{n}_{+}
\wedge D^{1}_{+}
\]
for $n\geq 0$ and for all closed subgroups $H$ of $G$.  

We note that there are in fact many different model structures on
$G\Top_{*}$, one for each collection of closed subgroups of $G$, and
also that $G$ need not be compact Lie for this to work.  However,
there are some subtleties with the symmetric monoidal structure when
one works with a general collection of closed subgroups.
Fausk~\cite{fausk} has an excellent treatment of these issues.

We can now define a $G$-spectrum.  Throughout this paper, $O (n)$
denotes the orthogonal group of $n\times n$ orthogonal real
matrices.  

\begin{definition}\label{defn-spectrum}
For a compact Lie group $G$, a \textbf{$G$-spectrum} $X$ is a sequence of
pointed $G\times O (n)$-spaces $X_{n}$ for $n\geq 0$, together with $G\times O
(p)\times O (q)$-equivariant structure maps 
\[
\nu_{p,q}\mathcolon S^{p} \wedge X_{q} \xrightarrow{} X_{p+q}
\]
that are associative and unital.  Here $S^{p}$ is the one-point
compactification of $\R^{p}$, so inherits an $G\times O (p)$-action
where $G$ acts trivially and the point at infinity is the fixed
basepoint.  The unital condition is simply that $\nu_{0,q}$ is the
identity.  The associative condition is that the composite
\[
S^{p} \wedge S^{q} \wedge X_{r} \xrightarrow{1\wedge \nu_{q,r}}
S^{p} \wedge X_{q+r} \xrightarrow{\nu_{p,q+r}} X_{p+q+r} 
\]
is equal to the composite 
\[
S^{p} \wedge S^{q} \wedge X_{r} \xrightarrow{\mu_{p,q}\wedge 1}
S^{p+q} \wedge X_{r} \xrightarrow{\nu_{p+q,r}} X_{p+q+r},
\]
where $\mu_{p,q}$ is the isomorphism induced by the standard
isomorphism $\R^{p} \oplus \R^{q}\cong \R^{p+q}$.  We will denote the
category of $G$-spectra by $\Gspectra$, where a map of $G$-spectra
$f\mathcolon X\xrightarrow{}Y$ is
a collection of $G\times O (n)$-equivariant maps $f_{n}\mathcolon
X_{n}\xrightarrow{}Y_{n}$ that are compatible with the structure
maps.  
\end{definition}

If $G=*$, a $G$-spectrum is just an orthogonal
spectrum~\cite{mandell-may-schwede-shipley}, and a $G$-spectrum for
general $G$ is just an orthogonal spectrum with an action of $G$ on
it.  As such, the category of $G$-spectra is closed symmetric
monoidal.  The easiest way to see this is to note that a $G$-spectrum
is an $S$-module in the category of $G$-orthogonal sequences, and $S$
is a commutative monoid in this closed symmetric monoidal category.
Here a \textbf{$G$-orthogonal sequence} $X$ is a collection of pointed
$G\times O (n)$-spaces $X_{n}$ for $n\geq 0$.  The category of
$G$-orthogonal sequences is closed symmetric monoidal where
\[
(X\otimes Y)_{n} = \bigvee_{p+q=n} O (n)_{+} \wedge_{O (p)\times O
(q)} (X_{p}\wedge Y_{q})
\]
with diagonal $G$-action.  The closed structure is given by 
\[
\hom (X,Y)_{n} = \prod_{m\geq n} \Map_{O (m-n)} (X_{m-n}, Y_{m})
\]
with $O (n)$ acting on a map by acting on the target $Y_{m}$ using the inclusion 
\[
O (n) \subseteq O (m-n)\times O (n) \xrightarrow{} O (m).
\]
Note that the maps in $\hom (X,Y)_{n}$ are not $G$-equivariant, so $G$
can act by conjugation as usual.  However, just as with $G$-spaces,
$G$-orthogonal sequences are enriched over (pointed) topological
spaces, and the enrichment is given by 
\[
\Map (X,Y) = \prod_{n} \Map_{G\times O (n)} (X_{n}, Y_{n}).  
\]

Now, $S$ is the $G$-orthogonal sequence whose $n$th space is $S^{n}$,
the one-point compactification of $\R^{n}$ with induced pointed
orthogonal action and trivial $G$-action.  This is a monoid using the
$G\times O (p)\times O (q)$-equivariant isomorphisms 
\[
S^{p}\wedge S^{q} \xrightarrow{} S^{p+q}.  
\]
It is a commutative monoid because the commutativity isomorphism of
the symmetric monoidal structure on $G$-orthogonal spectra involves a
$(p,q)$-shuffle, just as with symmetric
spectra~\cite{hovey-shipley-smith}.  The only thing this requires is
that the $(p,q)$-shuffle be an element of $O (p+q)$, which it of course
is.  

It is then clear that $G$-spectra are $S$-modules, and so inherit a
closed symmetric monoidal structure.  The category of $G$-spectra is
also enriched over topological spaces, where $\Map_{\Gspectra} (X,Y)$
is the subspace of $\Map (X,Y)$ consisting of maps of orthogonal
spectra.  

\section{Level structures}\label{sec-level}

A $G$-spectrum has levels $X_{n}$ for each integer $n$, and $X_{n}$ is
a $G\times O (n)$-space.  However, if $V$ is an orthogonal
representation of $G$ corresponding to $\rho \mathcolon
G\xrightarrow{}O (n)$, we can twist the $G$-action on $X_{n}$ by
$\rho$ to obtain a new $G$-space $X (V)$ with $X (V)=X_{n}$ as
an $O (n)$-space, but where
\[
g\cdot x = \rho (g) (gx) = g (\rho (g)x).  
\]
We can also think of 
\[
X (V) = O (\R^{n},V)_{+} \wedge_{O (n)} X_{n}
\]
with diagonal $G$-action, where $G$ acts on the set $O (\R^{n},V)$ of
orthogonal maps from $\R^{n}$ to $V$ by sending $\tau$ to $g\tau
g^{-1}$, which in this case is just $g\tau$ since $G$ acts trivially
on $\R^{n}$.

Note that $X (V)$ is not a $G\times O (n)$-space; it is instead an $O
(n)\rtimes_{\rho} G$-space, where the semi-direct product is taken
with respect to the action of $G$ on $O (n)$ where $g$ acts by
conjugating by $\rho (g)$.  

Let us denote by $\ev_{V}$ the evaluation functor $\ev_{V}\mathcolon
\Gspectra \xrightarrow{}G\Top_{*}$ that takes $X$ to $X (V)$.  This
functor should have a left adjoint $F_{V}$ whose $V$th space $(F_{V}K)
(V)$ is $O (V)_{+}\wedge K$; this means that if $n=\dim V$, we should
have
\[
(F_{V}K)_{n}= O (V, \R^{n})_{+} \wedge_{O (V)} (O (V)_{+}\wedge K) = O (V,
\R^{n})_{+}\wedge K.  
\]
In terms of the representation $\rho \mathcolon G\xrightarrow{}O (n)$
corresponding to $V$, we have 
\[
(F_{V}K)_{n} = O (n)_{+}\wedge K, 
\]
with $G$-action $g (\tau ,x)= (\tau \rho (g^{-1}),gx)$.  This
obviously commutes with $O (n)$-action, so gives us a $G\times O
(n)$-space.  

\begin{proposition}\label{prop-evaluation}
If $V$ is an orthogonal $n$-dimensional $G$-representation, the
functor $\ev_{V}\mathcolon \Gspectra \xrightarrow{}G\Top_{*}$ has a
left adjoint $F_{V}$ defined by
\[
(F_{V}K)_{n+k} = O (n+k)_{+} \wedge_{O (k)\times O (n)} (S^{k}\wedge
(O (V, \R^{n})_{+}\wedge K)),
\]
and there is a natural isomorphism 
\[
F_{V} (K) \wedge F_{W} (L) \cong F_{V\oplus W} (K\wedge L).  
\]
\end{proposition}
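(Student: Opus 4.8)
The plan is to split the proposition into its two assertions—first the adjunction $F_V \dashv \ev_V$, then the monoidal compatibility—and in each case reduce the verification to the corresponding statement for $G$-orthogonal sequences, since everything in sight is built over the commutative monoid $S$. First I would recall the free-forgetful adjunction at the level of $G$-orthogonal sequences: the evaluation functor sending a $G$-orthogonal sequence $X$ to $X(V) = O(\R^n, V)_+ \wedge_{O(n)} X_n$ has a left adjoint $G_V$ given on the $k$th space by $(G_V K)_k = O(k, \R^n)_+ \wedge_{O(n)} K$ when $k = n$ (with suitable conjugated $G$-action as spelled out before the proposition) and the empty space otherwise; this is just the enriched Yoneda/free functor for the indexing category, and the adjunction isomorphism is a direct computation with the coend defining $X(V)$. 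Then $F_V$ is obtained by inducing up along $S$: $F_V K = S \otimes G_V K$, and the formula in the statement is exactly $(S \otimes G_V K)_{n+k} = \bigvee_{p+q = n+k} O(n+k)_+ \wedge_{O(p)\times O(q)} (S^p \wedge (G_V K)_q)$, where only the $q = n$ term survives, giving $O(n+k)_+ \wedge_{O(k)\times O(n)} (S^k \wedge (O(V,\R^n)_+ \wedge K))$. Since $\ev_V$ on $G$-spectra is the composite of the forgetful functor to $G$-orthogonal sequences followed by the sequence-level evaluation, its left adjoint is the composite of the two left adjoints, i.e. $S \otimes G_V(-) = F_V$; this is a standard "free module on a free object" argument and I would cite the monadicity of $S$-modules over $G$-orthogonal sequences.

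For the monoidal isomorphism $F_V(K) \wedge F_W(L) \cong F_{V\oplus W}(K \wedge L)$, I would first establish the analogous statement at the level of $G$-orthogonal sequences, $G_V(K) \otimes G_W(L) \cong G_{V \oplus W}(K \wedge L)$, which is really the assertion that the free-functor construction is strong monoidal from based $G$-spaces (indexed by $V$) into $G$-orthogonal sequences; concretely $(G_V K \otimes G_W L)_{n+m} = O(n+m)_+ \wedge_{O(n)\times O(m)} (O(V,\R^n)_+ \wedge O(W,\R^m)_+ \wedge K \wedge L)$, and one identifies this with $(G_{V\oplus W}(K\wedge L))_{n+m} = O(n+m)_+ \wedge_{O(n+m)} (O(V\oplus W, \R^{n+m})_+ \wedge K \wedge L)$ using the evident map $O(V,\R^n) \times O(W,\R^m) \to O(V\oplus W, \R^{n+m})$ together with the fact that $O(n+m)/(O(n)\times O(m))$-torsors of orthogonal splittings match up. Then I would transport this along the monoidal structure on $S$-modules: for any monoid $S$ in a symmetric monoidal category and objects $A, B$, one has $(S \otimes A) \wedge_S (S \otimes B) \cong S \otimes (A \otimes B)$ when $S$ is commutative (the free module functor is strong monoidal from the base to $S$-modules), so applying this with $A = G_V K$, $B = G_W L$ gives $F_V K \wedge F_W L \cong S \otimes (G_V K \otimes G_W L) \cong S \otimes G_{V\oplus W}(K \wedge L) = F_{V\oplus W}(K \wedge L)$. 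Naturality in $K$ and $L$ is inherited from naturality of each ingredient.

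The main obstacle I anticipate is bookkeeping the $G$-actions rather than any conceptual difficulty: in the formulas for $G_V K$ and $F_V K$ the group $G$ acts through $\rho\colon G \to O(n)$ on the $O(V,\R^n)$-factor by $\tau \mapsto \tau\rho(g^{-1})$, and one must check that all the isomorphisms above—the coend manipulations, the identification $O(V,\R^n) \times O(W,\R^m) \to O(V\oplus W,\R^{n+m})$, and the passage through $S \otimes (-)$—are equivariant for the diagonal $G$-action, i.e. compatible with $\rho_V \oplus \rho_W = \rho_{V\oplus W}$. This is routine but needs care, particularly because $X(V)$ is naturally an $O(n)\rtimes_\rho G$-space rather than a $G \times O(n)$-space, so one should phrase the sequence-level adjunction in a way that keeps track of the twisting and only at the end observe that the twisted action on $(F_V K)_{n+k}$ does assemble into an honest $G \times O(n+k)$-action as claimed in the discussion preceding the proposition. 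A secondary, purely formal point is to confirm that the nonequivariant enrichment over $\Top_*$ is respected so that $F_V$ is genuinely a $\Top_*$-enriched left adjoint, which follows from the enriched adjunction at sequence level and the enriched monadicity of $S$-modules; I would state this but not belabor it.
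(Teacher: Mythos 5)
Your proposal is correct and follows essentially the same route as the paper: the paper likewise identifies $F_VK$ as the free $S$-module on the $G$-orthogonal sequence concentrated in degree $n$ with value $O(V,\R^n)_+\wedge K$, unwinds the twisted $G$-action to get the adjunction, and reduces the monoidal isomorphism to the sequence-level identification $O(n+m)_+\wedge_{O(n)\times O(m)}\bigl((O(V,\R^n)_+\wedge K)\wedge(O(W,\R^m)_+\wedge L)\bigr)\cong O(V\oplus W,\R^{n+m})_+\wedge(K\wedge L)$, exactly as you do. (Only a notational slip: your displayed formula for $(G_VK)_n$ should be the free $O(n)$-space $O(V,\R^n)_+\wedge K$, not a quotient $\wedge_{O(n)}$, as your subsequent computation correctly assumes.)
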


\begin{proof}
Let us first note that $F_{V}K$ is in the fact the free $S$-module on
the $G$-orthogonal sequence that is $O(V, \R^{n})_{+}\wedge K$ in
degree $n$ and the basepoint elsewhere.  Thus a map from $F_{V}K$ to a
spectrum $X$ is the same thing as a map of $G\times O (n)$-spaces 
\[
\alpha \mathcolon O (V, \R^{n})_{+}\wedge K \xrightarrow{} X_{n}
\]
The left-hand side is a free $O (n)$-space on $K$, but the $G$-action
is twisted.  Working this out gives that $f$ is equivalent to a map 
\[
\beta \mathcolon K\xrightarrow{}X_{n}
\]
such that $\beta (gk)=\rho (g) (g\beta (k))$, where $\rho \mathcolon
G\xrightarrow{}O(n)$ corresponds to the representation $V$.  This is
then the same thing as an equivariant map $K\xrightarrow{}\ev_{V}X$.  

For the last part of the proposition, because $F_{V} (K)$ is a free
$S$-module, it is enough to check that 
\begin{gather*}
O (n+m)_{+} \wedge_{O (n)\times O (m)} ((O (V,\R^{n})_{+}\wedge
K)\wedge (O (W, \R^{m})_{+}\wedge L)) \\
\cong O (V\oplus W, \R^{n+m})_{+} \wedge (K\wedge L)
\end{gather*}
as $G\times O (n+m)$-spaces.  We leave this to the reader.  
\end{proof}

Now by choosing a set of representations $V$, we can use the functors
$F_{V}$ and $\ev_{V}$ to construct a level model structure.  Note that
the functor $F_{0}$, where $0$ is the only $0$-dimensional
representation, plays a special role as it is symmetric monoidal.  

\begin{definition}\label{defn-level}
Given a set $\cat{U}$ of finite-dimensional orthogonal
$G$-representations, define a map $f$ of $G$-spectra to be
a $\cat{U}$-level equivalence (resp., $\cat{U}$-level fibration) if
$\ev_{V}f$ is a weak equivalence (resp. fibration) of based $G$-spaces
for all $V\in \cat{U}$.  Define $f$ to be a $\cat{U}$-cofibration if
it has the left lifting property with respect to all maps that are
both $\cat{U}$-level equivalences and $\cat{U}$-level fibrations.
\end{definition}

\begin{theorem}\label{thm-level}
The $\cat{U}$-cofibrations, $\cat{U}$-level fibrations, and
$\cat{U}$-level equivalences define a proper cellular topological
model structure on $G$-spectra.  This $\cat{U}$-level model structure
is symmetric monoidal when $\cat{U}$ is closed under finite direct
sums.
\end{theorem}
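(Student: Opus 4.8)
The plan is to build the $\cat{U}$-level structure as a cofibrantly generated — indeed transferred — model structure, with generating cofibrations $I_{\cat{U}}=\{F_{V}i\}$ and generating trivial cofibrations $J_{\cat{U}}=\{F_{V}j\}$, where $V$ runs over $\cat{U}$ and $i,j$ run over the standard generating (trivial) cofibrations of $G\Top_{*}$. Equivalently, this is the structure right-induced along the adjunction
\[
F\colon\ \prod_{V\in\cat{U}}G\Top_{*}\ \rightleftarrows\ \Gspectra\ \colon\ U,\qquad U=(\ev_{V})_{V\in\cat{U}},\quad F\bigl((K_{V})_{V}\bigr)=\bigvee_{V}F_{V}K_{V}.
\]
The proof then amounts to checking the hypotheses of the recognition theorem for cofibrantly generated model categories (\cite[\S2.1]{hovey-model}), and afterwards the extra conditions packaged in the words \emph{cellular}, \emph{topological}, \emph{proper}, and \emph{symmetric monoidal}.

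Everything is powered by two observations. First, $F_{V}$ commutes with smashing with a based space, $F_{V}(A\wedge K)\cong (F_{V}A)\wedge K$, which is immediate from the formula for $F_{V}$ in Proposition~\ref{prop-evaluation} since $K$ carries trivial orthogonal actions and so passes through the induced-space quotients; hence $F_{V}$ preserves $G$-homotopies, and writing each generating (trivial) cofibration of $G\Top_{*}$ as $(G/H)_{+}\wedge i''$ for a relative CW-inclusion $i''$ of based spaces shows that $F_{V}$ sends it to a map whose every space is a relative $G\times O(m)$-CW inclusion and which becomes a genuine cofibration (resp.\ trivial cofibration) of $G$-spaces upon applying any $\ev_{W}$. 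Second, each $\ev_{W}$ preserves all limits and colimits, since limits and colimits of $G$-spectra are created levelwise by the forgetful functor to $G$-orthogonal sequences (as $S\wedge(-)$ preserves colimits) and $\ev_{W}X$ is just $X_{n}$ with a twisted action. Granting these, smallness of the domains of $I_{\cat{U}}$ and $J_{\cat{U}}$ follows by adjunction from compactness of finite $G$-CW complexes, and the identifications ``$I_{\cat{U}}$-injectives $=$ $\cat{U}$-level trivial fibrations'' and ``$J_{\cat{U}}$-injectives $=$ $\cat{U}$-level fibrations'' are formal lifting computations that make the remaining recognition hypotheses (two-out-of-three and retract closure of $\cat{U}$-level equivalences, and the comparison of the two injective classes) automatic. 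Cellularity then follows because $I_{\cat{U}}$-cell complexes are levelwise relative $G$-CW complexes, so the domains and codomains of $I_{\cat{U}}$ are compact and $\cat{U}$-cofibrations are effective monomorphisms. I expect the acyclicity condition — that every $J_{\cat{U}}$-cell complex is a $\cat{U}$-level equivalence — to be the delicate point: a naive argument shows only that each $F_{V}j$ is a level equivalence, a class not obviously stable under arbitrary pushouts, so one genuinely needs the stronger fact extracted above, that $\ev_{W}(F_{V}j)$ is an honest trivial cofibration of $G$-spaces, whereupon applying the exact functor $\ev_{W}$ exhibits a $J_{\cat{U}}$-cell complex as a transfinite composite of pushouts of trivial cofibrations of $G$-spaces for every $W\in\cat{U}$.

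Properness and the topological structure are then inherited from $G\Top_{*}$. Since $\ev_{W}$ is exact and, by the first observation, carries $\cat{U}$-cofibrations to cofibrations and $\cat{U}$-level fibrations to fibrations of $G$-spaces, left (resp.\ right) properness follows by applying $\ev_{W}$ to a pushout (resp.\ pullback) square and invoking properness of $G\Top_{*}$. The pushout-product axiom relative to $\Top_{*}$ again follows from $F_{V}(A\wedge K)\cong F_{V}A\wedge K$, which turns the pushout-product of $F_{V}i$ with a generating cofibration of $\Top_{*}$ into $F_{V}$ of a pushout-product taken inside $G\Top_{*}$, hence into a map lying in $I_{\cat{U}}$-cell (or $J_{\cat{U}}$-cell when one factor is trivial).

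Finally, assume $\cat{U}$ is closed under finite direct sums, so in particular $0\in\cat{U}$. The isomorphism $F_{V}(K)\wedge F_{W}(L)\cong F_{V\oplus W}(K\wedge L)$ of Proposition~\ref{prop-evaluation}, together with cocontinuity of $F_{V\oplus W}$, identifies the pushout-product $F_{V}i\boxprod F_{W}i'$ with $F_{V\oplus W}$ applied to the pushout-product $i\boxprod i'$ in the symmetric monoidal model category $G\Top_{*}$; the latter lies in $I_{G\Top_{*}}$-cell, and in $J_{G\Top_{*}}$-cell when $i'$ is a generating trivial cofibration, so $F_{V}i\boxprod F_{W}i'$ lies in $I_{\cat{U}}$-cell (resp.\ $J_{\cat{U}}$-cell), using $V\oplus W\in\cat{U}$. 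By the usual reduction of the pushout-product axiom to generators (\cite[\S4.2]{hovey-model}) this yields the axiom, and the unit axiom is automatic because $S=F_{0}S^{0}$ is cofibrant, being the codomain of the generating cofibration $F_{0}(\ast\to S^{0})\in I_{\cat{U}}$.
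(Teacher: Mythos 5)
Your proposal is correct and follows essentially the same route as the paper: the same generating sets $\{F_{V}i\}$ and $\{F_{V}j\}$, acyclicity via the observation that $\ev_{W}F_{V}j$ is a genuine trivial cofibration (the paper phrases this as an inclusion of a $G$-deformation retract) combined with exactness of $\ev_{W}$, and the pushout-product axiom via $F_{V}i\boxprod F_{W}i'\cong F_{V\oplus W}(i\boxprod i')$ together with the functor $F_{0}$ for the topological enrichment. The only detail the paper flags that you omit is the point-set issue that transfinite colimits of sets must agree with colimits of weak Hausdorff spaces, which it defers to Mandell--May.
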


Of course, it is usual to take $\cat{U}$ to be a
\textbf{$G$-universe}; that is, a set of representations closed under
direct sums and summands that contains the one-dimensional trivial
representation.  At this point, it is unnecessary to put such a
restriction on $\cat{U}$.  

The proof of this theorem is standard, and so we only give a sketch
below.  
 
\begin{proof}
The generating cofibrations are the maps $F_{V}i$ for $V\in
\cat{U}$ and for $i$ a generating cofibration 
\[
(G/H \times S^{n-1})_{+}\xrightarrow{}(G/H \times D^{n})_{+}
\]
of based $G$-spaces.  The
generating trivial cofibrations are the maps $F_{V}j$ for $j$ a
generating trivial cofibration 
\[
(G/H \times D^{n})_{+} \xrightarrow{} (G/H\times D^{n}\times
D^{1})_{+}
\]
of based $G$-spaces.  The heart of the proof that this does define a
model structure is the fact that transfinite compositions of pushouts
of maps of the form $F_{V}j$ are $\cat{U}$-level equivalences.  The
basic point is that the maps $\ev_{W}F_{V}j$ are in fact inclusions of
$G$-deformation retracts, so that transfinite compositions of pushouts
of them are still $G$-homotopy equivalences and so weak equivalences.
In addition, one must also ensure that the set colimit is the same as
the space colimit, which can of course go wrong for weak Hausdorff
spaces.  This is dealt with just as in~\cite[Theorem
2.4]{mandell-may-equivariant-orthogonal}.

The key to proving that the $\cat{U}$-level model structure is
symmetric monoidal is the isomorphism $F_{V}i\boxprod F_{W}j\cong
F_{V\oplus W}(i\boxprod j)$, where $f\boxprod g$ is the map
\[
(\dom f\smash \codom g)\amalg_{\dom f\smash \dom g} (\codom f\smash \dom
g) \xrightarrow{} \codom f \smash \codom g.
\]
This isomorphism follows from the last part of
Proposition~\ref{prop-evaluation}, and makes it clear that the
$\cat{U}$-level model structure is symmetric monoidal
because $G\Top_{*}$ is so.  

The topological structure is similar but easier, since it is given by
the symmetric monoidal functor 
\[
\Top_{*} \xrightarrow{} G\Top_{*} \xrightarrow{F_{0}} \Gspectra 
\]
where the first map takes $X$ to $X$ with trivial $G$-action.  
\end{proof}

\section{The stable model structure}\label{sec-stable}

We now want to localize the $\cat{U}$-level model structure to produce
a stable model structure.  For any finite-dimensional orthogonal
representation $V$ of $G$, $S^{V}$ denotes the one-point
compactification of $V$ with fixed basepoint the point at infinity.
The point of the stable model structure is to make the $G$-spectra
$F_{0}S^{V}$ invertible under the smash product, for $V\in \cat{U}$,
so that we can desuspend by representation spheres in $\cat{U}$.  If
we want to get a symmetric monoidal result, we should assume that
$\cat{U}$ is closed under finite direct sums.  If we also want to get
a result that is stable in the usual sense of being able to desuspend
by the circle, we should assume that $\cat{U}$ contains the
one-dimensional trivial representation.  We will such a $\cat{U}$ a
\textbf{$G$-preuniverse}.  It is usual to assume that
$\cat{U}$ is closed under summands as well, so is a $G$-universe, but
this is not necessary.  We discuss this a bit more later.  

Now, the obvious candidate for an inverse of $S^{V}$ is $F_{V}S^{0}$,
because this is $S$ ``shifted by $V$.''  However, $F_{V}S^{0}\wedge
S^{V}$ is $F_{V} S^{V}$, when we want it to be $S$.  Fortunately,
there is a canonical map
\[
\lambda_{V}\mathcolon F_{V}S^{V} \xrightarrow{} S
\]
adjoint to the identity map 
\[
S^{V} \xrightarrow{}\ev_{V}S = O (\R^{n},V)_{+}\wedge_{O (n)} S^{n} \cong 
S^{V}
\]
where $n=\dim V$. 

Bousfield localization~\cite{hirschhorn} is a general theory that
starts with a (nice) model category and a map and produces a new model
category in which that map is now a weak equivalence, while
introducing as few other new weak equivalences as possible.  So we'd
like to define the $\cat{U}$-stable model structure as the (left)
symmetric monoidal Bousfield localization of the $\cat{U}$-level model
structure with respect to the maps $\lambda_{V}$ for $V$ an
irreducible representation in $\cat{U}$.  The second author has such a
theory of symmetric monoidal Bousfield localizations, but as it has
not appeared, we just carry it out in this special case, which is
simplified by the fact that $\lambda_{V}$ is a map between cofibrant
objects.  If $\lambda_{V}$ is to be a weak equivalence in a symmetric
monoidal model structure, we will need $\lambda_{V}\wedge A$ to be a
weak equivalence as well for all cofibrant $A$.  The cofibrant objects
in the level model structure are all built out of the domains and
codomains of the generating cofibrations of the level model structure.
In our case, the codomains of the generating cofibrations are
contractible, so we don't need them.

We therefore make the following definition. 

\begin{definition}\label{defn-stable}
For a compact Lie group $G$ and a $G$-preuniverse $\cat{U}$, we define
the \textbf{$\cat{U}$-stable model structure} on $G$-spectra to be the left
Bousfield localization of the $\cat{U}$-level model structure with
respect to the maps $\lambda_{V}\wedge F_{W} ((G/H)_{+}\wedge S^{n-1}_{+})$,
where $V,W\in \cat{U}$, $H$ is a closed subgroup of $G$, and $n\geq 0$.  
\end{definition}

Let us recall that Bousfield localization produces a new model
structure on the same category with the same cofibrations.  It works
by first constructing the locally fibrant objects and then using them
to construct the local equivalences.  In our case, then, a
$G$-spectrum $X$ will be \textbf{$\cat{U}$-stably fibrant} if it is
$\cat{U}$-level fibrant and the maps
\[
\Map (\lambda_{V}\wedge F_{W} ((G/H)_{+}\wedge S^{n-1}_{+}),X)
\]
are weak equivalences of topological spaces.  

Note that in Hirschhorn's book~\cite{hirschhorn} these mapping spaces
are in fact built from framings on the model category, and do not
refer to topological mapping spaces.  However, if the model category
is simplicial, the source is cofibrant, and the target is fibrant, the
mapping spaces created by framings are weakly equivalent to the
simplicial mapping spaces.  For simplicial model categories, then, we
can use simplicial mapping spaces instead of framings to form the
Bousfield localization with respect to $f$ if $f$ is a map of
cofibrant objects.  Every topological model category is also
simplicial through the geometric realization functor.  The simplicial
mapping spaces in a topological model category are just $\Sing \Map
(X,Y)$, where $\Sing $ denotes the singular complex functor.  But
$\Sing$ preserves and reflects weak equivalences.  Thus, for
topological model categories, we can use topological mapping spaces to
form the Bousfield localization with respect to $f$ as long as $f$ is
a map of cofibrant objects.  

The process of Bousfield localization then continues by defining a map
$f$ to be a \textbf{$\cat{U}$-stable equivalence} if $\Map (Qf,X)$ is
a weak equivalence of topological spaces for all $\cat{U}$-stably
fibrant $X$.  Here $Qf$ denotes any cofibrant approximation to $f$ in
the $\cat{U}$-level model structure.  That is, if $f\mathcolon
A\xrightarrow{}B$, then we would have a commutative square 
\[
\begin{CD}
A' @>Qf>> B' \\
@VVV @VVV \\
A @>>f> B
\end{CD}
\]
where $A'$ and $B'$ are cofibrant and the vertical maps are
$\cat{U}$-level equivalences.  Such an approximation is easily
obtained by taking a cofibrant approximation $QA$ to $A$ and then
factoring the composite $QA\xrightarrow{}A\xrightarrow{}B$ into a
cofibration followed by a trivial fibration in the $\cat{U}$-level
model structure.  The point of making this cofibrant approximation is
so we can use topological mapping spaces, as explained in the
preceding paragraph.  

The process of Bousfield localization concludes by defining $f$ to be
a \textbf{$\cat{U}$-stable fibration} if $f$ has the right lifting
property with respect to all maps that are both $\cat{U}$-level
cofibrations and $\cat{U}$-stable equivalences.  

Of course we expect the $\cat{U}$-stably fibrant objects to be
$\Omega$-spectra in an appropriate sense.  For this to make sense, we
need to note that any $G$-spectrum $X$ possesses natural maps 
\[
S^{V} \wedge X (W) \xrightarrow{} X (V\oplus W)
\]
that are both $G$ and $O (m)\times O (n)$-equivariant, where $m=\dim
V$ and $n=\dim W$.  Indeed, remember that $S^{V}=S^{m}$ as an $O
(m)$-space, and $X (W)=X_{n}$ as an $O (n)$-space, so these maps are
just the structure maps $\nu_{m,n}$ of $X$.  We just have to check
that $\nu_{m,n}$ is $G$-equivariant with respect to the twisted
$G$-actions.  So let $\rho_{1}\mathcolon G\xrightarrow{}O (m)$ and
$\rho_{2}\mathcolon G\xrightarrow{}O (n)$ denote the homomorphisms
corresponding to $V$ and $W$, so that the composite 
\[
\rho_{1}\times \rho_{2}\mathcolon G\xrightarrow{(\rho_{1},\rho_{2})} O
(m)\times O (n) \xrightarrow{} O (m+n)
\]
corresponds to $V\oplus W$.  We compute:
\begin{gather*}
\nu_{m,n} (g\cdot (x,y)) = \nu_{m,n} (\rho_{1} (g)gx,\rho_{2} (g)gy) \\
= (\rho_{1} (g)\times \rho_{2} (g))\nu_{m,n} (gx,gy) = (\rho_{1}\times
\rho_{2}) (g)g\nu_{m,n} (x,y),
\end{gather*}
as required.  

By taking adjoints, this means that any $G$-spectrum $X$ has maps 
\[
X (W) \xrightarrow{}\Omega^{V}X (V\oplus W) = \Map (S^{V}, X (V\oplus W))
\]
of $G$-spaces for all $V$ and $W$.  Note that $G$ acts by conjugation
on $\Omega^{V}X (V\oplus W)$, as usual with mapping spaces.  

\begin{definition}\label{defn-omega}
Given a $G$-preuniverse $\cat{U}$, a
\textbf{$\cat{U}-\Omega$-spectrum} is a $G$-spectrum $X$ such that the
map
\[
X (W) \xrightarrow{} \Omega^{V}X (V\oplus W)
\]
is a weak equivalence in $G\Top_{*}$ for all $V,W\in \cat{U}$.  
\end{definition}

\begin{theorem}\label{thm-omega}
The $\cat{U}$-stably fibrant $G$-spectra are the
$\cat{U}-\Omega$-spectra.  
\end{theorem}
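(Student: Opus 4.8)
The plan is to unwind the description of $\cat{U}$-stable fibrancy given just after Definition~\ref{defn-stable}, reducing it by a chain of adjunctions to the $\Omega$-condition of Definition~\ref{defn-omega}. Two preliminary remarks simplify matters. First, the hypothesis ``$\cat{U}$-level fibrant'' is vacuous: a $G$-spectrum $X$ is $\cat{U}$-level fibrant precisely when each $X(V)\xrightarrow{}*$ is a fibration in $G\Top_*$, and every object of $G\Top_*$ is fibrant, because fibrations there are detected on fixed points and every object of $\Top_*$ is fibrant. In particular every $\cat{U}$-$\Omega$-spectrum is automatically $\cat{U}$-level fibrant, so the theorem amounts to showing that, for a $G$-spectrum $X$, the maps
\[
\Map_{\Gspectra}\bigl(\lambda_V\wedge F_W((G/H)_+\wedge S^{n-1}_+),\,X\bigr)\qquad(V,W\in\cat{U},\ H\leq G\ \text{closed},\ n\geq 0)
\]
are all weak equivalences of spaces if and only if each map $X(W)\xrightarrow{}\Omega^V X(V\oplus W)$ is a weak equivalence in $G\Top_*$.

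Now fix $V,W\in\cat{U}$ and an arbitrary based $G$-space $K$. Using the natural isomorphism $F_V(S^V)\wedge F_W(K)\cong F_{V\oplus W}(S^V\wedge K)$ of Proposition~\ref{prop-evaluation} together with the unit isomorphism $S\wedge F_W(K)\cong F_W(K)$, the map $\lambda_V\wedge F_W(K)$ is identified with a map of $G$-spectra $F_{V\oplus W}(S^V\wedge K)\xrightarrow{}F_W(K)$. Applying $\Map_{\Gspectra}(-,X)$, the $\Top_*$-enriched adjunctions $F_{(-)}\dashv\ev_{(-)}$, and the adjunction $(-)\wedge S^V\dashv\Omega^V$ in $G\Top_*$, one finds that $\Map_{\Gspectra}(\lambda_V\wedge F_W(K),X)$ is the map
\[
\Map_{G\Top_*}\bigl(K,\,X(W)\bigr)\xrightarrow{}\Map_{G\Top_*}\bigl(K,\,\Omega^V X(V\oplus W)\bigr)
\]
obtained by post-composition with some $G$-map $X(W)\xrightarrow{}\Omega^V X(V\oplus W)$. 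The technical core of the proof is to verify that this $G$-map is exactly the adjoint structure map of Definition~\ref{defn-omega}. By naturality in $K$ it suffices to treat $K=X(W)$ with the identity map, where the claim reduces to saying that $\lambda_V\wedge\mathrm{id}_X\colon F_V(S^V)\wedge X\xrightarrow{}S\wedge X=X$ recovers the $S$-module structure maps of $X$; this holds essentially by the definition of $\lambda_V$ as adjoint to $\mathrm{id}_{S^V}$. I expect the bookkeeping with the twisted $G$-actions and with the isomorphism of Proposition~\ref{prop-evaluation} to be the only genuinely delicate step here.

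Granting this, and writing $\sigma_{V,W}\colon X(W)\xrightarrow{}\Omega^V X(V\oplus W)$ for the map of Definition~\ref{defn-omega}, it follows that $X$ is $\cat{U}$-stably fibrant if and only if $\Map_{G\Top_*}((G/H)_+\wedge S^{n-1}_+,\sigma_{V,W})$ is a weak equivalence for all $V,W\in\cat{U}$, all closed $H$, and all $n\geq 0$. So the proof reduces to the following fact about $G$-spaces: a map $g\colon Y\xrightarrow{}Y'$ in $G\Top_*$ is a weak equivalence if and only if $\Map_{G\Top_*}((G/H)_+\wedge S^{n-1}_+,g)$ is a weak equivalence for all closed $H$ and all $n\geq 0$. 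Via the adjunction $(G/H)\times(-)\dashv(-)^H$ and the triviality of the $G$-action on $S^{n-1}$ there is a natural identification $\Map_{G\Top_*}((G/H)_+\wedge S^{n-1}_+,Y)\cong\Map_{\Top}(S^{n-1},Y^H)$. If $g$ is a weak equivalence, then each $g^H$ is, and $\Map_{\Top}(S^{n-1},-)$ preserves weak equivalences since $S^{n-1}$ is cofibrant and every space is fibrant; this gives one direction. For the converse, the case $n=1$ shows that $Y^H\times Y^H\xrightarrow{}(Y')^H\times(Y')^H$ is a weak equivalence for every closed $H$, and a short elementary argument---a map of spaces whose self-product is a weak equivalence is itself a weak equivalence, as one checks on $\pi_0$ and on homotopy groups based at a point---shows that each $g^H$ is a weak equivalence. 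Assembling the pieces identifies the $\cat{U}$-stably fibrant $G$-spectra with those $X$ for which $\sigma_{V,W}$ is a weak equivalence for every $V,W\in\cat{U}$, i.e.\ with the $\cat{U}$-$\Omega$-spectra.
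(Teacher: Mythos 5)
Your proposal is correct and follows essentially the same route as the paper: the same chain of enriched adjunction isomorphisms identifying $\Map(\lambda_V\wedge F_W((G/H)_+\wedge S^{n-1}_+),X)$ with $\Map_{\Top_*}(S^{n-1}_+,(\sigma_{V,W})^H)$, followed by the observation that these test objects detect weak equivalences of $G$-spaces. You merely spell out two points the paper leaves implicit --- that level fibrancy is automatic, and the elementary $n=1$ self-product argument for the detection step --- and you correctly identify the induced map as the structure map $X(W)\to\Omega^V X(V\oplus W)$ (the paper's final display has a typo writing $X(V)$ for $X(W)$).
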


\begin{proof}
We have a series of isomorphisms
\begin{gather*}
\Map_{\Gspectra} (F_{V}S^{V}\wedge F_{W} ((G/H)_{+}\wedge
S^{n-1}_{+}),X) \\
\cong \Map_{\Gspectra} (F_{V\oplus W} (S^{V}\wedge (G/H)_{+}\wedge
S^{n-1}_{+}),X) \\
\cong \Map_{G\Top_{*}} (S^{V}\wedge (G/H)_{+}\wedge S^{n-1}_{+}, X
(V\oplus W)) \\
\cong \Map_{G\Top_{*}} ((G/H)_{+}\wedge S^{n-1}_{+},\Omega^{V}X
(V\oplus W)) \\
\cong \Map_{\Top_{*}} (S^{n-1}_{+},(\Omega^{V}X (V\oplus W))^{H}).
\end{gather*}
and a similar isomorphism 
\[
\Map_{\Gspectra} (S\wedge F_{W} ((G/H)_{+}\wedge S^{n-1}_{+}),X)
\cong \Map_{\Top_{*}} (S^{n-1}_{+},(X (V))^{H}).
\]
Tracing the maps through this series of isomorphisms shows that $X$ is
$\cat{U}$-stably fibrant if and only if the map 
\[
X (V) \xrightarrow{}\Omega^{V}X (V\oplus W)
\]
is a weak equivalence of $G$-spaces for all $V$ and $W$ in $\cat{U}$.  
\end{proof}

\begin{corollary}\label{cor-lambda}
For $V\in \cat{U}$, the map $\lambda_{V}\mathcolon
F_{V}S^{V}\xrightarrow{}S$ is a $\cat{U}$-stable equivalence.  In fact
$\lambda_{V}\wedge F_{W}K$ is a $\cat{U}$-stable equivalence for all
$W\in \cat{V}$ and all cofibrant pointed $G$-spaces $K$.  
\end{corollary}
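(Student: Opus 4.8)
The plan is to test $\lambda_{V}\wedge F_{W}K$ directly against the $\cat{U}$-stably fibrant objects, using the adjunctions of Proposition~\ref{prop-evaluation} to turn the relevant mapping spaces of $G$-spectra into mapping spaces of $G$-spaces, in exactly the way the proof of Theorem~\ref{thm-omega} does, but now with an arbitrary cofibrant pointed $G$-space $K$ in place of $(G/H)_{+}\wedge S^{n-1}_{+}$.

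The first step is cofibrancy bookkeeping. Since $S^{V}$ and $K$ are cofibrant in $G\Top_{*}$, since $F_{V}$, $F_{W}$ and $F_{V\oplus W}$ are left Quillen for the $\cat{U}$-level model structure (here one uses that a $G$-preuniverse is closed under finite direct sums, so $V\oplus W\in\cat{U}$), and since that model structure is symmetric monoidal, both $F_{V}S^{V}\wedge F_{W}K$ and $F_{W}K=S\wedge F_{W}K$ are cofibrant; hence $\lambda_{V}\wedge F_{W}K$ is a map of cofibrant objects, and we may take it to be its own cofibrant approximation. By the discussion of Bousfield localization following Definition~\ref{defn-stable}, it therefore suffices to show that $\Map(\lambda_{V}\wedge F_{W}K,X)$ is a weak equivalence of topological spaces for every $\cat{U}$-stably fibrant $X$.

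So fix such an $X$; by Theorem~\ref{thm-omega} it is a $\cat{U}$-$\Omega$-spectrum. Using the isomorphism $F_{V}S^{V}\wedge F_{W}K\cong F_{V\oplus W}(S^{V}\wedge K)$ of Proposition~\ref{prop-evaluation}, then the adjunction $F_{V\oplus W}\dashv\ev_{V\oplus W}$, then the adjunction between $-\wedge S^{V}$ and $\Omega^{V}$ in $G\Top_{*}$, and likewise $\Map(F_{W}K,X)\cong\Map_{G\Top_{*}}(K,X(W))$, I would identify $\Map(\lambda_{V}\wedge F_{W}K,X)$ with the map
\[
\Map_{G\Top_{*}}\bigl(K,X(W)\bigr)\longrightarrow\Map_{G\Top_{*}}\bigl(K,\Omega^{V}X(V\oplus W)\bigr)
\]
given by post-composition with the adjoint structure map $X(W)\to\Omega^{V}X(V\oplus W)$. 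This is exactly the identification carried out inside the proof of Theorem~\ref{thm-omega}, and the tracing of maps there goes through verbatim with $K$ replacing $(G/H)_{+}\wedge S^{n-1}_{+}$, since every isomorphism used is natural in the $G$-space variable.

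To conclude: because $X$ is a $\cat{U}$-$\Omega$-spectrum and $V,W\in\cat{U}$, the adjoint structure map $X(W)\to\Omega^{V}X(V\oplus W)$ is a weak equivalence in $G\Top_{*}$; and since $K$ is cofibrant and $G\Top_{*}$ is a topological model category in which every object is fibrant, $\Map_{G\Top_{*}}(K,-)$ carries it to a weak equivalence of spaces. Hence $\Map(\lambda_{V}\wedge F_{W}K,X)$ is a weak equivalence for every $\cat{U}$-stably fibrant $X$, so $\lambda_{V}\wedge F_{W}K$ is a $\cat{U}$-stable equivalence; specializing to $W=0$ and $K=S^{0}$ gives $\lambda_{V}\wedge F_{0}S^{0}=\lambda_{V}$, which proves the first assertion. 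The one step requiring genuine care is the identification of $\Map(\lambda_{V}\wedge F_{W}K,X)$ with post-composition by the adjoint structure map, where one must track the twisted $G$-actions through the chain of isomorphisms; but this has already been done in Theorem~\ref{thm-omega}, so the present corollary is essentially a restatement of it. (Alternatively, one could take that theorem's special case $K=(G/H)_{+}\wedge S^{n-1}_{+}$ and bootstrap to all cofibrant $K$ by cellular induction, using the gluing lemma in the $\cat{U}$-stable model structure and the fact that the codomains of the generating cofibrations of $G\Top_{*}$ are equivariantly contractible.)
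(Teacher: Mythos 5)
Your proof is correct and follows essentially the same route as the paper: the paper also establishes the general statement by rerunning the chain of adjunction isomorphisms from Theorem~\ref{thm-omega} with $K$ in place of $(G/H)_{+}\wedge S^{n-1}_{+}$, identifying $\Map(\lambda_{V}\wedge F_{W}K,X)$ with postcomposition by $X(W)\to\Omega^{V}X(V\oplus W)$ and invoking cofibrancy of $K$. Your added cofibrancy bookkeeping and the $W=0$, $K=S^{0}$ specialization are just slightly more explicit versions of what the paper does.
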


\begin{proof}
We get the first statement by taking $W=0$ in
Therorem~\ref{thm-omega}.  To get the general statement, we repeat the
argument of Theorem~\ref{thm-omega} to see that
\[
\Map_{\Gspectra } (\lambda_{V}\wedge F_{W}K,X) 
\]
is the map 
\[
\Map_{G\Top_{*}} (K, X (W)) \xrightarrow{} \Map_{G\Top_{*}} (K,\Omega^{V}X (V\oplus W)).
\]
If $X$ is a $\cat{U}$-stably fibrant $G$-spectrum, this map is a weak
equivalence since $K$ is cofibrant.  
\end{proof}

We then have the following theorem.  

\begin{theorem}\label{thm-stable}
Fix a $G$-preuniverse $\cat{U}$.  The category of $G$-spectra equipped
with the $\cat{U}$-stable model structure is a left proper cellular
topological stable symmetric monoidal model category in which the
$G$-spectra $F_{0}S^{V}$ for $V\in \cat{U}$ are invertible under the
smash product in the homotopy category.  
\end{theorem}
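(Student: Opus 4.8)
The plan is to derive each asserted property of the $\cat{U}$-stable model structure from the general machinery of left Bousfield localization, together with the level-wise results already established. Concretely, left properness, cellularity, and the topological enrichment are formal consequences of applying a left Bousfield localization to a left proper cellular (hence, via Hirschhorn, localizable) model category: the $\cat{U}$-level model structure is left proper, cellular, and topological by Theorem~\ref{thm-level}, and left Bousfield localization with respect to a \emph{set} of maps preserves all three properties (the topological structure is inherited because the localization has the same cofibrations and the level structure was topological). So the first step is simply to invoke Definition~\ref{defn-stable} and Hirschhorn's existence theorem for left Bousfield localizations, citing~\cite{hirschhorn}, and to note that the resulting model structure is automatically left proper, cellular, and topological.

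The second step is stability. By definition a model category is stable if it is pointed and the suspension–loop adjunction $(\Sigma,\Omega)$ on its homotopy category is an equivalence. The category of $G$-spectra is pointed, so what must be shown is that $\Sigma = S^{1}\wedge(-)$ is invertible on $\Ho$. Here is where the hypothesis that $\cat{U}$ is a $G$-preuniverse enters: by assumption the one-dimensional trivial representation $\R$ lies in $\cat{U}$, so $S^{1} = S^{\R}$ is a representation sphere coming from $\cat{U}$. The third step therefore folds into this: I claim that for every $V\in\cat{U}$ the object $F_{0}S^{V}$ is invertible in $\Ho$ with inverse $F_{V}S^{0}$. Indeed, $F_{0}S^{V}\wedge F_{V}S^{0}\cong F_{V}(S^{V}\wedge S^{0}) = F_{V}S^{V}$ by Proposition~\ref{prop-evaluation}, and the canonical map $\lambda_{V}\colon F_{V}S^{V}\to S$ is a $\cat{U}$-stable equivalence by Corollary~\ref{cor-lambda}. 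Since both $F_{V}S^{V}$ and $S$ are cofibrant in the level (hence stable) model structure, $\lambda_{V}$ is an isomorphism in the homotopy category; thus $F_{0}S^{V}\wedge(-)$ and $F_{V}S^{0}\wedge(-)$ are inverse self-equivalences of $\Ho$. Taking $V=\R$ gives that $\Sigma = F_{0}S^{1}\wedge(-)$ is invertible, so the model structure is stable; taking general $V\in\cat{U}$ gives the asserted invertibility of all the $F_{0}S^{V}$.

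The final, and genuinely substantive, step is showing that the $\cat{U}$-stable model structure is symmetric monoidal, i.e. that the smash product is a left Quillen bifunctor for the localized structure and that the unit axiom holds. The pushout-product axiom was verified for the level structure in Theorem~\ref{thm-level}, and it is a standard fact (see~\cite{hirschhorn}, or the discussion of monoidal localizations) that it is \emph{inherited} by a left Bousfield localization provided the localizing set $\{\lambda_{V}\wedge F_{W}((G/H)_{+}\wedge S^{n-1}_{+})\}$ is chosen so that $f\boxprod(\text{generating cofibration})$ is again a stable equivalence for each $f$ in the set. This is exactly why the localizing maps in Definition~\ref{defn-stable} were taken smashed with all $F_{W}((G/H)_{+}\wedge S^{n-1}_{+})$: using the isomorphism $F_{V}i\boxprod F_{W}j\cong F_{V\oplus W}(i\boxprod j)$ and the closure of $\cat{U}$ under finite direct sums, the pushout-product of a localizing map with a generating cofibration is again (up to level equivalence) of the form $\lambda_{V\oplus W'}\wedge F_{W''}(\cdots)$, which is a $\cat{U}$-stable equivalence by Corollary~\ref{cor-lambda}. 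The unit object $S = F_{0}S^{0}$ is already cofibrant, so the unit axiom is automatic. I expect this monoidal step — specifically, the bookkeeping that the smash of a localizing map with a generating cofibration lands among the $\cat{U}$-stable equivalences, using $\cat{U}$'s closure under direct sums — to be the main obstacle; everything else is a direct appeal to Hirschhorn's theory and to Corollary~\ref{cor-lambda}.
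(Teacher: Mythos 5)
Your overall skeleton matches the paper's: left properness and cellularity come for free from Hirschhorn's localization machinery; invertibility of $F_{0}S^{V}$ is obtained from $F_{0}S^{V}\wedge F_{V}S^{0}\cong F_{V}S^{V}$ together with Corollary~\ref{cor-lambda}; and stability is the special case $V=\R$. That part of your argument is correct and is exactly what the paper does.

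The monoidal step --- which you correctly identify as the substantive one --- is where there is a genuine gap. You appeal to ``a standard fact (see~\cite{hirschhorn})'' that the pushout-product axiom passes to a left Bousfield localization once the localizing maps interact well with the generating cofibrations. No such statement is in Hirschhorn, and the paper itself is explicit that the general theory of monoidal Bousfield localization has not appeared in print, which is precisely why it carries out the implication by hand. The missing argument is the following: first one shows that $F_{W}K\wedge(-)$ is left Quillen for the $\cat{U}$-stable structure whenever $F_{W}K$ is a domain or codomain of a generating cofibration, by checking (via the general criterion for when a left Quillen functor descends to a localization) that $F_{W}K\wedge f$ is a $\cat{U}$-stable equivalence for each localizing map $f$ --- and here the relevant computation is the \emph{smash} isomorphism $(\lambda_{V}\wedge F_{W'}K')\wedge F_{W}K\cong\lambda_{V}\wedge F_{W'\oplus W}(K'\wedge K)$, which lands in the scope of Corollary~\ref{cor-lambda}. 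Then, given a generating cofibration $f\mathcolon F_{W}K\to F_{W}L$ and a stable trivial cofibration $g\mathcolon C\to D$, one factors $F_{W}L\wedge g$ through the pushout and uses two-out-of-three to conclude that $f\boxprod g$ is a stable equivalence. Note also that your proposed bookkeeping via $F_{V}i\boxprod F_{W}j\cong F_{V\oplus W}(i\boxprod j)$ does not literally apply: $\lambda_{V}\mathcolon F_{V}S^{V}\to F_{0}S^{0}$ is not of the form $F_{V}$ applied to a map of $G$-spaces (its source and target are free on different representations), so the pushout-product of a localizing map with $F_{W}j$ is not ``again of the form $\lambda_{V\oplus W'}\wedge F_{W''}(\cdots)$.'' The correct reduction goes through smashing with the domains and codomains, as above. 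Finally, the topological enrichment is not automatic from ``same cofibrations'': it is deduced from the monoidal statement via the left Quillen symmetric monoidal functor $\Top_{*}\to G\Top_{*}\xrightarrow{F_{0}}\Gspectra$, so it should be stated as a consequence of the monoidal step rather than of the localization formalism alone.
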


\begin{proof}
Bousfield localizations preserve left proper cellular model
categories.  To see that the $\cat{U}$-stable model structure is
symmetric monoidal, we start by showing that if $W\in \cat{U}$ and $K$
is a cofibrant pointed $G$-space, then $F_{W}K\wedge (-)$ is a left
Quillen functor with respect to the $\cat{U}$-stable model structure.
It is of course a left Quillen functor with respect to the
$\cat{U}$-level model structure.  The general theory of Bousfield
localization then tells us that it is a left Quillen functor with
respect to the $\cat{U}$-stable model structure if and only if
$F_{W}K\wedge f$ is a $\cat{U}$-stable equivalence for all the maps
$f=\lambda_{V}\wedge F_{W'} ((G/H)_{+}\wedge S^{n-1}_{+})$ with
respect to which we are localizing.  But $F_{W}K\wedge f$ is of the
form $\lambda_{V}\wedge F_{T} (L)$ for some $T\in \cat{U}$ and a
cofibrant $L$, so this follows from Corollary~\ref{cor-lambda}.  

Now, since the cofibrations don't change in passing to the stable
model structure, to prove that the $\cat{U}$-stable model structure is
symmetric monoidal, it suffices to check that $f\boxprod g$ is a
$\cat{U}$-stable equivalence when $f\mathcolon
F_{W}K\xrightarrow{}F_{W}L$ is one of the generating cofibrations of
the $\cat{U}$-level model structure and $g\mathcolon C\xrightarrow{}D$
is a cofibration and a $\cat{U}$-stable equivalence.  But we have just
seen that $F_{W}K\wedge (-)$ and $F_{W}L\wedge (-)$ are left Quillen
functors on the stable model category.  Therefore the map 
\[
F_{W}L\wedge C \xrightarrow{} (F_{W}L\wedge C)\amalg_{F_{W}K\wedge C}
(F_{W}K\wedge D)
\]
is a $\cat{U}$-stable trivial cofibration, as a pushout of
$F_{W}K\wedge g$.  But the composite
\[
F_{W}L\wedge C \xrightarrow{} (F_{W}L\wedge C)\amalg_{F_{W}K\wedge C}
(F_{W}K\wedge D) \xrightarrow{f\boxprod g} F_{W}L \wedge D
\]
is $F_{W}L\wedge g$, so it too is a $\cat{U}$-stable trivial
cofibration.  Thus $f\boxprod g$ must be a $\cat{U}$-stable
equivalence.  

The $\cat{U}$-stable model structure is topological through the same
left Quillen symmetric monoidal functor
$\Top_{*}\xrightarrow{}\Gspectra$ that takes $X$ to $F_{0}X$ with the
trivial $G$-action.  The fact that 
\[
\lambda_{V}\mathcolon F_{V}S^{V}\cong F_{V}S^{0}\wedge F_{0}S^{V}
\xrightarrow{}S
\]
is a $\cat{U}$-stable equivalence shows that $F_{V}S^{0}$ is a smash
inverse to $F_{0}S^{V}$ (for $V\in \cat{U}$) in the homotopy category of the
$\cat{U}$-stable model structure.  In particular, we can take $V$ to
to be the one-dimensional trivial representation to see that the
suspension is invertible in the homotopy category, so the
$\cat{U}$-stable model structure is in fact stable in the usual
sense.  
\end{proof}

It is natural to think that if $\cat{U}$ is a $G$-preuniverse and
$\cat{U}'$ is the $G$-universe generated by $\cat{U}$, so just the
collection of summands of $\cat{U}$, then the $\cat{U}$-stable model
structure should be equivalent to the $\cat{U}'$-stable model
structure.  The argument for this would be that if $V\oplus W$ is in
$\cat{U}$, then the map
\[
S^{V} \wedge (S^{W}\wedge F_{V\oplus W}S^{0}) =S^{V\oplus W}\wedge
F_{V\oplus W}S^{0} \xrightarrow{} S
\]
is a weak equivalence in $\cat{U}$-stable model structure, and so
$S^{W}\wedge F_{V\oplus W}S^{0}$ is a smash inverse of $S^{V}$.  This
is wrong, though, because the left-hand side is not the derived smash
product since neither factor is cofibrant in the $\cat{U}$-model
structure.  So we cannot say that $S^{V}$ is invertible under smash
product in the homotopy category of the $\cat{U}$-stable model
structure.

\section{Comparison to Mandell-May method}\label{sec-comparison}

In this section, we compare our approach to equivariant stable
homotopy theory to the approach of Mandell and
May~\cite{mandell-may-equivariant-orthogonal}.  If we fix a universe
$\cat{U}$, a $G\cat{U}$-spectrum is in particular a $G$-functor from the
universe, thought of as a $G$-category via $\cat{U} (V,W)=O (V,W)_{+}$
with diagonal $G$-action when $\dim V=\dim W$ and $*$ otherwise, to
the category of pointed $G$-spaces. That is, such a functor has
natural $G$-equivariant maps 
\[
O (V,W)_{+} \wedge X (V) \xrightarrow{}X (W)
\]
that are associative and unital.  A $G\cat{U}$-spectrum is such a
functor equipped with an associative and unital natural transformation 
\[
S^{V} \wedge X (W) \xrightarrow{} X (V\oplus W)
\]
of $G$-functors on $\cat{U}\times \cat{U}$.  

There is then an obvious forgetful functor $\beta $ from
$G\cat{U}$-spectra to $G$-spectra with $(\beta X)_{n}=X (\R^{n})$.
Note that $(\beta X)_{n}$ is an $G\times O (n)$-space through the
$G$-map 
\[
O (\R^{n},\R^{n})_{+}\wedge X (\R^{n}) \xrightarrow{} X (\R^{n}).
\]
We get $G$-maps 
\[
S^{p} \wedge (\beta X)_{q} \xrightarrow{} (\beta X)_{p+q}
\]
by restricting the structure map of $X$ to $V=\R^{p}$.  These maps are
$G\times O (p)\times O (q)$-equivariant because the structure map of
$X$ is a natural transformation of $G$-functors on $\cat{U}\times
\cat{U}$.  They are associative and unital because the structure map
of $X$ is so.  

Conversely, we define a functor $\alpha$ from $G$-spectra to
$G\cat{U}$-spectra by defining 
\[
(\alpha X) (V) = O (\R^{n},V)_{+} \wedge_{O (n)} X_{n}
\]
where $n=\dim V$.  Equivalently, we define $(\alpha X) (V)=X_{n}$ with
group action 
\[
g\cdot x = \rho (g) (gx) = g (\rho (g)x)
\]
where $\rho\mathcolon G\xrightarrow{}O (n)$ is the representation
corresponding to $V$.  Then $\alpha X$ becomes a $G$-functor from
$\cat{U}$ to pointed $G$-spaces, because the map
\[
j_{V,W}\mathcolon O (V,W)_{+} \wedge X (V) \xrightarrow{} X (W)
\]
defined by $j (\tau ,x)=\tau x$ for $\tau \in O (V,W)=O (n)$ and $x\in
X (V)=X_{n}=X (W)$ is $G$-equivariant.  Indeed, let
$\rho_{1},\rho_{2}\mathcolon G\xrightarrow{}O (n)$ correspond to $V$
and $W$, respectively.  Then we compute:
\begin{gather*}
j_{V,W} (g (\tau ,x)) = j_{V,W} (\rho_{2} (g)\tau \rho_{1}
(g)^{-1},\rho_{1} (g) (gx)) \\
= \rho_{2} (g)
\tau \rho_{1} (g)^{-1} \rho_{1} (g) (gx) \\
= \rho_{2} (g)\tau (gx)
=\rho_{2} (g)g (\tau x) = g\cdot j_{V,W} (\tau ,x).  
\end{gather*}
A similar computation shows that the structure maps 
\[
\nu_{p.q}\mathcolon S^{p}\wedge X_{q} \xrightarrow{} X_{p+q}
\]
are $G$-equivariant maps 
\[
S^{V} \wedge X (W) \xrightarrow{} X (V\oplus W).
\]
In fact, we have already done this, just before
Definition~\ref{defn-omega}.  We leave the proof that the structure
maps are compatible with the $O (V,V')$ and $O (W,W')$-actions, so
define a natural transformation of $G$-functors on $\cat{U}\times
\cat{U}$, to the reader.  The associativity and unit axioms for $\alpha
X$ follow immediately from the ones for $X$, since the structure maps
are the same.  

Note that the composite functor $\beta \alpha$ is the identity
functor.  On the other hand, if $X$ is a $G\cat{U}$-spectrum and $\dim V=n$, then the
$G$-map 
\[
X (\R^{n},V)_{+} \wedge X (\R^{n}) \xrightarrow{} X (V),
\]
coming from the fact that $X$ is a $G$-functor,
descends to an isomorphism 
\[
X (\R^{n},V)_{+} \wedge_{O (n)} X (\R^{n}) \xrightarrow{} X (V)
\]
and so an isomorphism $\alpha \beta X\xrightarrow{}X$.  

We have therefore proved the following proposition, also proved
in~\cite[Theorem~V.1.5]{mandell-may-equivariant-orthogonal}.

\begin{proposition}\label{prop-equiv}
The functors $\alpha$ and $\beta$ are adjoint equivalences of
categories.  
\end{proposition}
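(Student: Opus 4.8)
The plan is to observe that the discussion preceding the statement has already supplied nearly all of the content, so what remains is to check functoriality of $\alpha$ and $\beta$, to upgrade the two displayed isomorphisms to natural transformations of the appropriate kind, and finally to verify the triangle identities. Conceptually, $\alpha$ is the left Kan extension along the inclusion into $\cat{U}$ of the skeletal subcategory on the objects $\R^{n}$, a fully faithful and essentially surjective $G$-functor, so the proposition is an instance of the principle that restriction along an equivalence of enriched domain categories is an equivalence; but since everything in sight is explicit it is just as quick to proceed by hand.

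First I would make $\alpha$ and $\beta$ into functors. A map $f\mathcolon X\to Y$ of $G\cat{U}$-spectra is sent by $\beta$ to the collection $f(\R^{n})\mathcolon X(\R^{n})\to Y(\R^{n})$; naturality of $f$ forces these to be $G\times O(n)$-equivariant and compatible with the structure maps $S^{p}\wedge(-)$, so $\beta f$ is a map of $G$-spectra. A map $g\mathcolon X\to Y$ of $G$-spectra is sent by $\alpha$ to $(\alpha g)(V)=O(\R^{n},V)_{+}\wedge_{O(n)}g_{n}$; since this is induced levelwise by $g$ on the underlying $O(n)$-spaces, it commutes with the maps $j_{V,W}$ of the $G$-functor structure and with the structure maps $S^{V}\wedge(-)\to(-)$, so $\alpha g$ is a map of $G\cat{U}$-spectra. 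Preservation of identities and composition is then immediate, both constructions being levelwise applications of functors of spaces.

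Next I would handle the natural isomorphisms. That $\beta\alpha$ is the identity functor is already noted above: $(\alpha X)(\R^{n})=O(\R^{n},\R^{n})_{+}\wedge_{O(n)}X_{n}=X_{n}$ with its original $G\times O(n)$-action and unchanged structure maps, and likewise on morphisms. For the other composite the preceding discussion produced, for each $G\cat{U}$-spectrum $X$, a map $\epsilon_{X}\mathcolon\alpha\beta X\to X$ obtained by letting the $G$-functor structure map $O(\R^{n},V)_{+}\wedge X(\R^{n})\to X(V)$ descend to $O(n)$-coinvariants. I would check: (i) $\epsilon_{X}$ is natural in $X$, which is the naturality of these structure maps for a morphism of $G\cat{U}$-spectra; (ii) $\epsilon_{X}$ is a map of $G\cat{U}$-spectra, i.e. compatible with the $O(V,W)_{+}\wedge(-)$-action and with the maps $S^{V}\wedge(-)\to(-)$, which follows from the associativity and unit axioms for $X$; and (iii) $\epsilon_{X}$ is an isomorphism, which holds because for any chosen orthogonal isomorphism $\tau_{0}\mathcolon\R^{n}\to V$ the map $X(\tau_{0})\mathcolon X(\R^{n})\to X(V)$ is already an isomorphism (a functor preserves isomorphisms) and identifies $X(V)$ with the $O(n)$-coinvariants of $O(\R^{n},V)_{+}\wedge X(\R^{n})$.

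Finally, with the identity natural isomorphism $\mathrm{id}_{\Gspectra}\to\beta\alpha$ as unit and $\epsilon\mathcolon\alpha\beta\to\mathrm{id}$ as counit, the two triangle identities reduce, because the unit is the identity, to the assertions that $\epsilon_{\alpha X}\mathcolon\alpha X\to\alpha X$ and $\beta\epsilon_{X}\mathcolon\beta X\to\beta X$ are identity maps; both are seen directly, since at level $W$ the map $\epsilon_{\alpha X}$ is the descent of the canonical quotient $O(\R^{n},W)_{+}\wedge X_{n}\to O(\R^{n},W)_{+}\wedge_{O(n)}X_{n}$ and hence the identity of $(\alpha X)(W)$, while $\epsilon_{X}$ at level $\R^{n}$ sends $[\,\mathrm{id},x\,]$ to $x$. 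Hence $\alpha\dashv\beta$ is an adjoint equivalence. (Alternatively one may simply invoke the standard fact that any equivalence of categories can be refined to an adjoint equivalence.) I do not expect a genuine obstacle here: the only point needing care is tracking the twisted $G$-actions through the coend defining $\alpha$ when verifying (ii), which is of exactly the same nature as the equivariance computations already carried out above.
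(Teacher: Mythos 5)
Your proposal is correct and follows essentially the same route as the paper, whose ``proof'' is just the discussion preceding the proposition: the explicit constructions of $\alpha$ and $\beta$, the observation that $\beta\alpha$ is the identity, and the descent of the $G$-functor structure map to an isomorphism $\alpha\beta X\to X$. You merely fill in the routine verifications (functoriality, naturality and equivariance of the counit, and the triangle identities) that the paper leaves to the reader.
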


Mandell and May also show that both $\alpha$ and $\beta$ are symmetric
monoidal, and of course they commute with the functors $F_{V}$ and
$\ev_{V}$ for $V\in \cat{U}$.  

The following proposition is then clear. 

\begin{proposition}\label{prop-equiv-level}
Let $\cat{U}$ be a $G$-universe.  With respect to the adjoint
equivalences $\alpha$ and $\beta$, the $\cat{U}$-level model structure
on $G$-spectra and the level model structure on $G\cat{U}$-spectra
coincide.  That is, $\alpha$ and $\beta$ preserve and reflect
cofibrations, fibrations, and weak equivalences.
\end{proposition}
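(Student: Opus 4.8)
The plan is to observe that both model structures in question are created, in the model-categorical sense, by the \emph{same} family of functors, namely the evaluation functors $\ev_{V}$ for $V\in\cat{U}$ with values in $G\Top_{*}$ and its standard fixed-point model structure. Indeed, by Definition~\ref{defn-level} a map $f$ of $G$-spectra is a $\cat{U}$-level equivalence (resp. $\cat{U}$-level fibration) exactly when $\ev_{V}f$ is a weak equivalence (resp. fibration) in $G\Top_{*}$ for all $V\in\cat{U}$, and the level model structure on $G\cat{U}$-spectra used by Mandell and May is defined in the very same way. So everything for weak equivalences and fibrations will follow once we know that $\alpha$ and $\beta$ intertwine these evaluation functors, and this is essentially built into the construction: $\ev_{V}(\alpha X)=(\alpha X)(V)=O(\R^{n},V)_{+}\wedge_{O(n)}X_{n}$ is literally $\ev_{V}X=X(V)$ for $n=\dim V$, so $\ev_{V}\circ\alpha=\ev_{V}$; and then $\ev_{V}\circ\beta=\ev_{V}\circ\alpha\circ\beta\cong\ev_{V}$, using the natural isomorphism $\alpha\beta\cong\mathrm{id}$ exhibited just before Proposition~\ref{prop-equiv} (one can also check this directly: $\ev_{V}(\beta Y)=O(\R^{n},V)_{+}\wedge_{O(n)}Y(\R^{n})\cong Y(V)$).

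With the intertwining isomorphisms in hand, the statement for weak equivalences and fibrations is purely formal: $f$ is a $\cat{U}$-level equivalence (resp. fibration) iff $\ev_{V}f$ is a weak equivalence (resp. fibration) for every $V\in\cat{U}$, iff $\ev_{V}(\alpha f)$ is so for every such $V$, iff $\alpha f$ is a level equivalence (resp. fibration) of $G\cat{U}$-spectra, and symmetrically for $\beta$; so both functors preserve and reflect these two classes. For cofibrations I would argue that, since $\alpha$ and $\beta$ are inverse equivalences of categories, they preserve and reflect lifting properties, and cofibrations in each structure are exactly the maps with the left lifting property against the maps that are simultaneously level equivalences and level fibrations (Definition~\ref{defn-level}); hence the cofibrations are preserved and reflected as well. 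Alternatively, one can note that $\alpha$ and $\beta$ preserve all colimits and commute with the $F_{V}$ for $V\in\cat{U}$, so they carry the generating cofibrations $F_{V}i$ of one structure to those of the other and thereby match up the classes of retracts of relative cell complexes.

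I do not expect a genuine obstacle here --- as the authors say, the proposition is ``clear.'' The only points that require a moment's care are (i) confirming that the level model structure on $G\cat{U}$-spectra in~\cite{mandell-may-equivariant-orthogonal} is indeed the one created by the functors $\ev_{V}$, $V\in\cat{U}$, rather than some variant, and (ii) checking that the isomorphisms $\ev_{V}\circ\alpha=\ev_{V}$ and $\ev_{V}\circ\beta\cong\ev_{V}$ are natural in the $G$-spectrum argument --- but naturality of the latter is inherited from that of $\alpha\beta\cong\mathrm{id}$, so once (i) is settled the whole proof is a formal consequence of $\alpha$ and $\beta$ forming an adjoint equivalence of categories that commutes with the $\ev_{V}$.
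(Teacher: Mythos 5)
Your proposal is correct and is essentially the argument the paper has in mind: the remark preceding the proposition notes that $\alpha$ and $\beta$ commute with the functors $F_{V}$ and $\ev_{V}$ for $V\in\cat{U}$, and since both level model structures have their weak equivalences and fibrations created by the $\ev_{V}$ (with cofibrations then determined by lifting, or equivalently generated by the $F_{V}i$), the structures must coincide. Your two points of care, and the two alternative treatments of cofibrations, are exactly the right details to check.
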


Of course, we want the stable model structures to coincide as well,
and they do.  

\begin{theorem}\label{thm-equiv}
Let $\cat{U}$ be a $G$-universe.  With respect to the adjoint
equivalences $\alpha$ and $\beta$, the $\cat{U}$-stable model
structure on $G$-spectra and the stable model structure on
$G\cat{U}$-spectra coincide.  
\end{theorem}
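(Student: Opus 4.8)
The plan is to show that both stable model structures are Bousfield localizations of the corresponding level model structures, and that the adjoint equivalences $\alpha,\beta$ match up the localizing sets. By Proposition~\ref{prop-equiv-level} we already know that $\alpha$ and $\beta$ identify the $\cat{U}$-level model structure on $G$-spectra with the level model structure on $G\cat{U}$-spectra. Bousfield localization depends only on the ambient model structure and the set of maps being inverted; since localization preserves cofibrations and an equivalence of model categories that identifies two level structures will identify their localizations at corresponding sets of maps, it suffices to check that the localizing set used to define the $\cat{U}$-stable model structure on $G$-spectra (Definition~\ref{defn-stable}) is carried by $\alpha$ to a set of maps that defines the stable model structure on $G\cat{U}$-spectra in the sense of~\cite{mandell-may-equivariant-orthogonal}, or more precisely to a set with the same class of local objects.

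First I would recall, or cite from~\cite{mandell-may-equivariant-orthogonal}, that the stable model structure on $G\cat{U}$-spectra is itself the left Bousfield localization of the level model structure whose fibrant objects are the $G\cat{U}$-$\Omega$-spectra: those $X$ for which $X(W)\to\Omega^{V}X(V\oplus W)$ is a weak equivalence of $G$-spaces for all $V,W\in\cat{U}$. This is exactly the condition in our Definition~\ref{defn-omega}. Next I would invoke Theorem~\ref{thm-omega}, which says the $\cat{U}$-stably fibrant $G$-spectra are precisely the $\cat{U}$-$\Omega$-spectra. Since $\alpha$ and $\beta$ commute with $\ev_{V}$ for $V\in\cat{U}$ (noted just after Proposition~\ref{prop-equiv}) and with the cotensoring $\Omega^{V}=\Map(S^{V},-)$ because they are symmetric monoidal, $X$ is a $\cat{U}$-$\Omega$-spectrum if and only if $\alpha X$ is a $G\cat{U}$-$\Omega$-spectrum. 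Thus $\alpha$ and $\beta$ carry the fibrant objects of one localization bijectively to the fibrant objects of the other.

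The main step is then the observation that two left Bousfield localizations of the same (left proper cellular) model category having the same class of fibrant objects necessarily coincide: the local equivalences are detected by mapping into fibrant-local objects (Hirschhorn~\cite{hirschhorn}), so equal fibrant objects force equal weak equivalences, hence equal fibrations, and the cofibrations were already equal. Applying this through the equivalence $(\alpha,\beta)$: transport the $\cat{U}$-stable model structure on $G$-spectra across $\alpha$ to get a localization of the level model structure on $G\cat{U}$-spectra; by the previous paragraph its fibrant objects are exactly the $G\cat{U}$-$\Omega$-spectra, which are also the fibrant objects of the Mandell–May stable structure; therefore the two localizations agree, and $\alpha,\beta$ preserve and reflect the stable weak equivalences and stable fibrations as claimed. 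The one point requiring care — the step I expect to be the real obstacle — is making sure that the Mandell–May stable model structure is genuinely presented as a Bousfield localization at a set with the advertised fibrant objects (so that the ``same fibrant objects implies same localization'' principle applies cleanly), rather than merely as a model structure whose fibrant objects happen to be the $\Omega$-spectra; if one prefers to avoid that reliance, the alternative is to exhibit the explicit localizing maps $\lambda_{V}\wedge F_{W}((G/H)_{+}\wedge S^{n-1}_{+})$ of Definition~\ref{defn-stable} as (images under $\alpha$ of) stable equivalences of $G\cat{U}$-spectra and conversely, using Corollary~\ref{cor-lambda} in one direction and a cofibrant-replacement argument in the other, and then quote that localizations at sets with the same local objects coincide.
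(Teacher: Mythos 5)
Your strategy is genuinely different from the one in the paper, and it contains a real gap --- the one you flag yourself at the end. The principle ``two left Bousfield localizations of the same model category with the same fibrant objects coincide'' is correct, but it only applies once both structures are known to \emph{be} left Bousfield localizations at sets of maps. The Mandell--May stable model structure is not constructed that way: its weak equivalences are defined directly as the $\pi_*$-isomorphisms, and \cite{mandell-may-equivariant-orthogonal} never presents it as a localization of the level structure at a set. So as written the key step does not go through. The clean way to close the gap is to invoke the stronger statement (due to Joyal) that \emph{any} model structure is determined by its cofibrations together with its class of fibrant objects: good cylinder objects for cofibrant $A$ can be chosen via cofibration/trivial-fibration factorizations, which depend only on the cofibrations; a map of cofibrant objects is a weak equivalence iff it induces bijections on homotopy classes of maps into every fibrant object; and cofibrant replacement is likewise controlled by the cofibrations alone. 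With that in hand your argument works: the cofibrations agree by Proposition~\ref{prop-equiv-level}, and the fibrant objects agree because both are the $\Omega$-spectra for $\cat{U}$ (Theorem~\ref{thm-omega} on one side, Corollary~III.4.10 of \cite{mandell-may-equivariant-orthogonal} on the other, with $\alpha$ and $\beta$ visibly preserving the $\Omega$-spectrum condition).

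The paper avoids this issue by a more hands-on route. It first shows $\alpha$ is left Quillen for the stable structures by checking that $\alpha$ sends the localizing maps $\lambda_V\wedge F_W((G/H)_+\wedge S^{n-1}_+)$ to Mandell--May stable equivalences; this uses their Lemma~III.4.5 (that $\alpha(\lambda_V)$ is a $\pi_*$-isomorphism) together with the fact that $\alpha$ is symmetric monoidal and that smashing a stable equivalence of cofibrant objects with a cofibrant object preserves it. It then shows $\beta$ preserves stable equivalences, first for maps into stably fibrant targets (by a lifting argument reducing to their Theorem~V.3.4, that stable equivalences of $\Omega$-spectra are level equivalences) and then in general by fibrant replacement; hence $\beta$ is also left Quillen, and the two inverse left Quillen equivalences force the structures to coincide. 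Note also that your proposed ``alternative'' at the end still quietly assumes the Mandell--May structure is a localization at a set, and that Corollary~\ref{cor-lambda} points the wrong way for it: it says the maps $\lambda_V\wedge F_W(-)$ are $\cat{U}$-stable equivalences, whereas what you would need is that they are Mandell--May stable equivalences --- which is exactly the external input (Lemma~III.4.5) that the paper imports.
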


\begin{proof}
We first prove that $\alpha$ is a left Quillen functor.  Since
$\alpha$ is already a left Quillen functor on the $\cat{U}$-level
model structure, the general theory of Bousfield localization tells us
that $\alpha$ is a left Quillen functor on the $\cat{U}$-stable model
structure if and only if $\alpha (\lambda_{V}\wedge F_{W}
((G/H)_{+}\wedge S^{n-1}_{+}))$ is a stable equivalence for all
$V,W\in \cat{U}$, closed subgroups $H$ and $n\geq 0$.  But $\alpha$ is
symmetric monoidal, so this is 
\[
\alpha (\lambda_{V}) \wedge \alpha (F_{W} ((G/H)_{+}\wedge S^{n-1}_{+})).
\]
The map $\alpha (\lambda_{V})$ is proved to be a a stable equivalence
(that is, an isomorphism on stable homotopy groups)
in~\cite[Lemma~III.4.5]{mandell-may-equivariant-orthogonal}.  Since it
is a stable equivalence of cofibrant objects in a symmetric monoidal
model category, it remains so after smashing with any cofibrant
object, such as $F_{W} ((G/H)_{+}\wedge S^{n-1}_{+})$.  Thus $\alpha$
is a left Quillen functor.  

It now follows that $\alpha$ preserves all stable equivalences.
Indeed, if $f\mathcolon A\xrightarrow{}B$ is a stable equivalence, we
can take a cofibrant approximation $Qf$ to $f$ that is level
equivalent to $f$.  More precisely, we take a level equivalence
$p\mathcolon QA\xrightarrow{}A$ where $QA$ is cofibrant, and then
factor $f\circ p$ into a cofibration $Qf\mathcolon QA\xrightarrow{}QB$
followed by a level equivalence $q\mathcolon QB\xrightarrow{}B$.  Then
$Qf$ is a stable equivalence between cofibrant objects.  Thus $\alpha
(Qf)$ is a stable equivalence.  Since $\alpha (p)$ and $\alpha (q)$
are level equivalences, it follows that $\alpha (f)$ is a stable
equivalence.

Of course, $\alpha$ and $\beta$ also preserve and reflect stably
fibrant objects, since these are $\Omega$-spectra with respect to
$\cat{U}$ in both cases. (These are called $\Omega -G$-spectra by
Mandell and May, and~ Corollary~III.4.10
of~\cite{mandell-may-equivariant-orthogonal} identifies them as the
stably fibrant objects).  We now use this to show that $\beta$
preserves stable equivalences whose target is stably fibrant.  Indeed,
suppose $f\mathcolon X\xrightarrow{}Y$ is a stable equivalence of
$G\cat{U}$-spectra and $Y$ is stably fibrant.  Let $j\mathcolon \beta
X\xrightarrow{}Z$ be a stable trivial cofibration to a stably fibrant
$G$-spectrum $Z$.  Then
\[
\alpha j\mathcolon X\cong \alpha (\beta X) \xrightarrow{}\alpha Z
\]
is a stable trivial cofibration to the stably fibrant
$G\cat{U}$-spectrum $\alpha Z$.  Since $Y$ is stably fibrant, we can
find a lift $g\mathcolon \alpha (Z)\xrightarrow{}Y$ in the commutative
diagram 
\[
\begin{CD}
X @>f>> Y \\
@V\alpha j VV @VVV \\
\alpha Z @>>> *
\end{CD}
\]
such that $g\circ (\alpha j)=f$.  Thus $g$ is a stable equivalence
between stably fibrant $G\cat{U}$-spectra.  By Theorem~V.3.4
of~\cite{mandell-may-equivariant-orthogonal}, $g$ is a level
equivalence.  Thus $\beta g$ is a $\cat{U}$-level equivalence, and so
\[
\beta f = (\beta g)\circ (\beta \alpha j) = (\beta g)\circ j
\]
is a $\cat{U}$-stable equivalence.  

We can now use this to prove that $\beta$ preserves arbitrary stable
equivalences.  Indeed, if $f\mathcolon X\xrightarrow{}Y$ is a stable
equivalence, let $j\mathcolon Y\xrightarrow{}RY$ be a stable trivial
cofibration to a stably fibrant $G\cat{U}$-spectrum $RY$.  Then factor
$j\circ f= (Rf)\circ i$, where $i\mathcolon X\xrightarrow{}RX$ is a
stable trivial cofibration and $Rf\mathcolon RX\xrightarrow{}RY$ is a
stable fibration.  Note that $Rf$ is necessarily a stable
equivalence.  Applying $\beta$, and using the fact that $\beta$
preserves stable equivalences whose target is stably fibrant, we see
that $\beta i$, $\beta j$, and $\beta (Rf)$ are all stable
equivalences.  It follows that $\beta f$ is also a stable
equivalence.  

Since $\beta$ preserves cofibrations and stable equivalences and has
right adjoint $\alpha$, $\beta$ is a left Quillen functor with respect
to the stable model structures.  It follows that $\alpha$ preserves
fibrations.  Of course, $\alpha$ is also a left Quillen functor, so
$\beta$ preserves fibrations as well, completing the proof.  
\end{proof}

Of course, since the $\cat{U}$-stable model structure coincides with
the Mandell-May model structure under the equivalences $\alpha$ and
$\beta$, all of the properties that Mandell and May prove hold for the
$\cat{U}$-stable model structure as well.  

We therefore have the following corollary. 

\begin{corollary}\label{cor-homotopy}
Suppose $\cat{U}$ is a $G$-universe. 
\begin{enumerate}
\item The weak equivalences in the $\cat{U}$-stable model structure
are the maps that induce isomorphisms on all stable homotopy groups
$\pi_{q}^{H} (-)$ for $q$ an integer and $H$ a closed subgroup of $G$,
where
\[
\pi_{q}^{H} (X) = \colim_{V\in \cat{U}} \pi_{q} (\Omega^{V}X (V)^{H})
\]
for $q\geq 0$ and
\[
\pi_{q}^{H} (X) = \colim_{V\in \cat{U}} \pi_{0}^{H} (\Omega^{V}X (V\oplus \R^{q})^{H})
\]
for $q<0$.  
\item The stable fibrations in the $\cat{U}$-stable model structure
are the level fibrations $p\mathcolon X\xrightarrow{}Y$ such that the
diagram of $G$-spaces 
\[
\begin{CD}
X (V) @>>> \Omega^{W} X (V\oplus W) \\
@VpVV @VV\Omega^{W}\!p V \\
Y (V) @>>> \Omega^{W}Y (V\oplus W)
\end{CD}
\]
is an homotopy pullback for all $V, W\in \cat{U}$.  
\item The $\cat{U}$-stable model structure is right proper.
\item The $\cat{U}$-stable model structure satisfies the monoid
axiom.  
\item Cofibrant objects are flat in the $\cat{U}$-stable model
structure, in the sense that if $X$ is cofibrant then $X\wedge (-)$
preserves stable equivalences.  
\end{enumerate}
\end{corollary}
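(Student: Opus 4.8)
The plan is to deduce every item from Theorem~\ref{thm-equiv}, which transports the $\cat{U}$-stable model structure on $G$-spectra to the Mandell--May stable model structure on $G\cat{U}$-spectra along the adjoint equivalences $\alpha$ and $\beta$. Since these are equivalences of categories that preserve and reflect cofibrations, fibrations, and weak equivalences, and since $\alpha,\beta$ commute with $\ev_{V}$, $F_{V}$, $\Omega^{V}$, and $\wedge$ for $V\in\cat{U}$, any structural property of the Mandell--May model structure pulls back verbatim. So each bullet is just a citation plus a translation.

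\begin{proof}
By Theorem~\ref{thm-equiv}, the functors $\alpha$ and $\beta$ are inverse equivalences of categories carrying the $\cat{U}$-stable model structure on $G$-spectra to the stable model structure on $G\cat{U}$-spectra in the sense of~\cite{mandell-may-equivariant-orthogonal}; in particular each of $\alpha,\beta$ preserves and reflects cofibrations, fibrations, and weak equivalences, and by Proposition~\ref{prop-equiv-level} and the remarks following Proposition~\ref{prop-equiv} they commute with $\ev_{V}$, $F_{V}$, $\Omega^{V}$ for $V\in\cat{U}$ and are symmetric monoidal. Each statement now follows by transporting the corresponding result of Mandell and May.

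(1) By Theorem~III.4.4 and Corollary~III.4.10 of~\cite{mandell-may-equivariant-orthogonal}, the weak equivalences of $G\cat{U}$-spectra are the maps inducing isomorphisms on the groups $\pi_{q}^{H}$, defined by the displayed colimits over $V\in\cat{U}$. Since $\alpha$ and $\beta$ commute with $\ev_{V}$ and $\Omega^{V}$ and with the fixed-point functor $(-)^{H}$, and since a colimit over $\cat{U}$ is computed the same way on both sides, the homotopy groups of a $G$-spectrum $X$ agree with those of $\alpha X$; hence the $\cat{U}$-stable equivalences are exactly the $\pi_{*}^{H}$-isomorphisms.

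(2) Theorem~III.4.8 of~\cite{mandell-may-equivariant-orthogonal} identifies the stable fibrations of $G\cat{U}$-spectra as the level fibrations $p$ for which the square comparing $X(V)$ with $\Omega^{W}X(V\oplus W)$ is a homotopy pullback of $G$-spaces for all $V,W\in\cat{U}$. Applying $\beta$ (which preserves and reflects both level fibrations and homotopy pullbacks, the latter because it is an equivalence of categories compatible with $\ev_{V}$ and $\Omega^{W}$) gives the stated characterization of $\cat{U}$-stable fibrations.

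(3), (4), (5) Right properness of the Mandell--May stable model structure is Proposition~III.4.11 (or III.5.3) of~\cite{mandell-may-equivariant-orthogonal}; the monoid axiom is their Proposition~III.7.4; and flatness of cofibrant objects is contained in their treatment of the smash product on cofibrant objects (Theorem~III.7.5 and the surrounding discussion), which shows that smashing with a cofibrant object preserves stable equivalences. All three are properties phrased purely in terms of the model structure and the closed symmetric monoidal structure, both of which are carried across by the symmetric monoidal equivalences $\alpha,\beta$; combined with Theorem~\ref{thm-stable}, which already records that the $\cat{U}$-stable structure is left proper, cellular, and symmetric monoidal, this proves (3), (4), and (5).
\end{proof}

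The only mild subtlety — the step I would expect to require the most care — is matching the precise indexing of the colimit defining $\pi_{q}^{H}$ in~(1) and confirming that Mandell and May's $\Omega$-$G$-spectra really are our $\cat{U}$-$\Omega$-spectra under $\alpha$, $\beta$; but this identification was already made in the proof of Theorem~\ref{thm-equiv}, so nothing new is needed.
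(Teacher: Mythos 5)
Your proposal is correct and is essentially the paper's own argument: the paper derives this corollary purely by transporting the corresponding Mandell--May results across the Quillen equivalences $\alpha$ and $\beta$ established in Theorem~\ref{thm-equiv}, exactly as you do (your version merely supplies more explicit citations into~\cite{mandell-may-equivariant-orthogonal} than the paper bothers to give).
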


The last two properties are the essential properties needed for a good
theory of monoids and modules over them in a model category.  For a
good theory of commutative monoids, one typically needs a positive
model structure, and this too is provided in Mandell and May, and so
holds also for the $\cat{U}$-stable model structure.  We mention that
Stolz~\cite{stolz-thesis} also has a different positive model
structure, analogous to the convenient positive model structure of
Shipley~\cite{shipley-convenient}, where a cofibration of commutative
monoids is in particular a cofibration in the underlying category with
its positive model structure.  

\providecommand{\bysame}{\leavevmode\hbox to3em{\hrulefill}\thinspace}
\providecommand{\MR}{\relax\ifhmode\unskip\space\fi MR }
\providecommand{\MRhref}[2]{%
  \href{http://www.ams.org/mathscinet-getitem?mr=#1}{#2}
}
\providecommand{\href}[2]{#2}


\begin{thebibliography}{LMSM86}

\bibitem[EM97]{elmendorf-may}
A.~D. Elmendorf and J.~P. May, \emph{Algebras over equivariant sphere spectra},
  J. Pure Appl. Algebra \textbf{116} (1997), no.~1-3, 139--149, Special volume
  on the occasion of the 60th birthday of Professor Peter J. Freyd. \MR{1437616
  (98b:55009)}

\bibitem[Fau08]{fausk}
Halvard Fausk, \emph{Equivariant homotopy theory for pro-spectra}, Geom. Topol.
  \textbf{12} (2008), no.~1, 103--176. \MR{2377247 (2009c:55010)}

\bibitem[Hir03]{hirschhorn}
Philip~S. Hirschhorn, \emph{Model categories and their localizations},
  Mathematical Surveys and Monographs, vol.~99, American Mathematical Society,
  Providence, RI, 2003. \MR{2003j:18018}

\bibitem[Hov99]{hovey-model}
Mark Hovey, \emph{Model categories}, American Mathematical Society, Providence,
  RI, 1999. \MR{99h:55031}

\bibitem[HSS00]{hovey-shipley-smith}
Mark Hovey, Brooke Shipley, and Jeff Smith, \emph{Symmetric spectra}, J. Amer.
  Math. Soc. \textbf{13} (2000), no.~1, 149--208. \MR{2000h:55016}

\bibitem[LMSM86]{lewis-may-steinberger}
L.~G. Lewis, Jr., J.~P. May, M.~Steinberger, and J.~E. McClure,
  \emph{Equivariant stable homotopy theory}, Springer-Verlag, Berlin, 1986,
  With contributions by J. E. McClure. \MR{88e:55002}

\bibitem[MM02]{mandell-may-equivariant-orthogonal}
M.~A. Mandell and J.~P. May, \emph{Equivariant orthogonal spectra and
  {$S$}-modules}, Mem. Amer. Math. Soc. \textbf{159} (2002), no.~755, x+108.
  \MR{2003i:55012}

\bibitem[MMSS01]{mandell-may-schwede-shipley}
M.~A. Mandell, J.~P. May, S.~Schwede, and B.~Shipley, \emph{Model categories of
  diagram spectra}, Proc. London Math. Soc. (3) \textbf{82} (2001), no.~2,
  441--512. \MR{2001k:55025}

\bibitem[Shi04]{shipley-convenient}
Brooke Shipley, \emph{A convenient model category for commutative ring
  spectra}, Homotopy theory: relations with algebraic geometry, group
  cohomology, and algebraic {$K$}-theory, Contemp. Math., vol. 346, Amer. Math.
  Soc., Providence, RI, 2004, pp.~473--483. \MR{2066511 (2005d:55014)}

\bibitem[Sto11]{stolz-thesis}
Martin Stolz, \emph{Equivariant structure on smash powers of commutative ring
  spectra}, Ph.D. thesis, University of Bergen, 2011.

\end{thebibliography}

\end{document}